\newcommand{\dualV}[2]{\langle #1, #2 \rangle_{V^*\times V}}
\newcommand{\dualVi}[2]{\langle #1, #2 \rangle_{V_{\ell}^*\times V_{\ell}}}
\newcommand{\dualX}[2]{\langle #1\, ,#2 \rangle_{X^{*}\times X}}
\newcommand{\dualHH}[2]{\langle #1, #2 \rangle_{H^{-1}(\Omega)\times H^{1}_{0}(\Omega)}}
\newcommand{\seq}[1]{\{#1_n\}_{n\in \N}}
\newcommand{\ska}[2]{\left(#1,#2\right)}
\newcommand{\Wp}{W^{1,p}(\Omega)}
\newcommand{\D}{\mathcal{D}'(\Omega)}
\newcommand{\dx}{\,\mathrm{d}x}
\newcommand{\ee}[1]{\mathrm{e}^{#1}}
\newcommand{\sfrac}[2]{\mbox{\footnotesize$\displaystyle\frac{#1}{#2}$}}
\newcommand{\R}{\mathbb{R}}
\newcommand{\N}{\mathbb{N}}
\newcommand{\incl}{\hookrightarrow}
\newcommand{\Lq}{p}
\DeclareMathOperator{\graph}{graph}
\newcommand{\dom}{{D}}
\newcommand{\ran}{{R}}
\renewcommand{\phi}{\varphi}
\newtheorem{ass}{Assumption}
\journalname{}
\begin{document}

\title{Convergence analysis of domain decomposition based time integrators for degenerate parabolic equations
\thanks{This work was funded by CRC 901 Control of self-organizing nonlinear systems: Theoretical methods and concepts of application.}
}
\titlerunning{Domain decomposition integrators}

\author{Monika Eisenmann        \and
             Eskil Hansen%
}

\institute{
  Monika Eisenmann \at
  Institut f\"{u}r Mathematik, Technische Universit\"{a}t Berlin, Stra\ss e des 17.\ Juni 136, 10623 Berlin, Germany\\
  \email{meisenma@math.tu-berlin.de} 
  \and
  Eskil Hansen \at
  Centre for Mathematical Sciences, Lund University, P.O.\ Box 118, 221 00 Lund, Sweden \\
  \email{eskil.hansen@na.lu.se}
}
\date{\today}
\maketitle

\begin{abstract}
Domain decomposition based time integrators allow the usage of parallel and distributed hardware, making them well-suited for the temporal discretization of parabolic systems, in general, and degenerate parabolic problems, in particular. The latter is due to the degenerate equations' finite speed of propagation. In this study, a rigours convergence analysis is given for such integrators without assuming any restrictive regularity on the solutions or the domains. The analysis is conducted by first deriving a new variational framework for the domain decomposition, which is applicable to the two standard degenerate examples. That is, the $p$-Laplace and the porous medium type vector fields. Secondly, the decomposed vector fields are restricted to the underlying pivot space and the time integration of the parabolic problem can then be interpreted as an operators splitting applied to a dissipative evolution equation. The convergence results then follow by employing elements of the approximation theory for nonlinear semigroups.
\keywords{Domain decomposition \and Time integration \and Operator splitting \and Convergence analysis \and Degenerate parabolic equations}
\subclass{65M55 \and 65M12 \and 35K65  \and 65J08}
\end{abstract}

\newpage

\section{Introduction}\label{sec:intro}

Nonlinear parabolic equations of the form 
\begin{equation} \label{eq:pde}
\partial u/\partial t = \nabla\cdot \bigl(D(u,\nabla u)\nabla u\bigr)\quad\text{on }\Omega\times (0,T),
\end{equation}
equipped with suitable boundary and initial conditions, are frequently encountered in applications. If the diffusion constant $D(u,\nabla u)$ vanishes for some values of $u$ and $\nabla u$, i.e., the equation is degenerate, one obtains a quite different dynamics compared to the linear case. The two main nonlinear features are finite speed of propagation and the absence of parabolic smoothening of the solution. Concrete applications can, e.g., be found when modelling gas flow through porous media, phase transitions and population dynamics. A survey of such applications is given in \cite[Section~1.3 and Chapter~2]{Vasquez.2007}. In order to keep the presentation as clear-cut as possible, we will mostly ignore the presence of lower-order advection and reactions terms. 

Approximating the solution of a partial differential equation typically results in large-scale computations, which require the usage of parallel and distributed hardware. One possibility to design numerical schemes that make use of such hardware is to decompose the equation's domain into a family of subdomains. The domain decomposition method then consists of an iterative procedure where, in every step, the equation is solved independently on each subdomain and the resulting solutions are thereafter communicated to the adjacent subdomains. This independence of the decomposed equations and the absence of global communication enables the parallel and distributed implementation of domain decomposition methods. For linear parabolic equations the common procedure is to first discretize the equation in time by a standard implicit integrator. Then an elliptic equation on $\Omega$ is obtained in every time step, which is iteratively solved by a domain decomposition based discretization. We refer to the monographs~\cite{Mathew.2008,QuarteroniValli.1999,ToselliWidlund.2005} for an in-depth treatment of this approach.  Another possibility is to apply the domain decomposition method to the full space-time domain $\Omega\times (0,T)$, which leads to an iterative procedure over parabolic problems that can be parallelized both in space and time; see, e.g., \cite{Gander.1999,GanderHalpern.2007,GiladiKeller.2002}. 

When considering nonlinear parabolic problems one finds that there are hardly any results concerning the analysis of domain decomposition based schemes. Two exceptions are the papers~\cite{KimEtal.2000,Lapin.1991}, where domain decomposition schemes are analyzed for non-degenerate quasilinear parabolic equations and the degenerate two-phase Stefan problem, respectively. The lack of results in the context of degenerate equations is rather surprising from a practical point of view, as the equations' finite speed of propagation is ideal for applying domain decomposition strategies. For example, a solution that is initially zero in parts of the domain $\Omega$ will in each time step only propagate to a small number of neighboring subdomains, which limits the computational work considerably. However, from a theoretical perspective the lack of convergence results is less surprising. The issue is that the standard domain decomposition schemes all link together the equations on the subdomains via boundary conditions. As the solutions of degenerate parabolic equations typically lack higher-order regularity, making sense of such boundary linking is, at the very least, challenging.

\begin{figure}
 \centering 
 \includegraphics[scale=0.6]{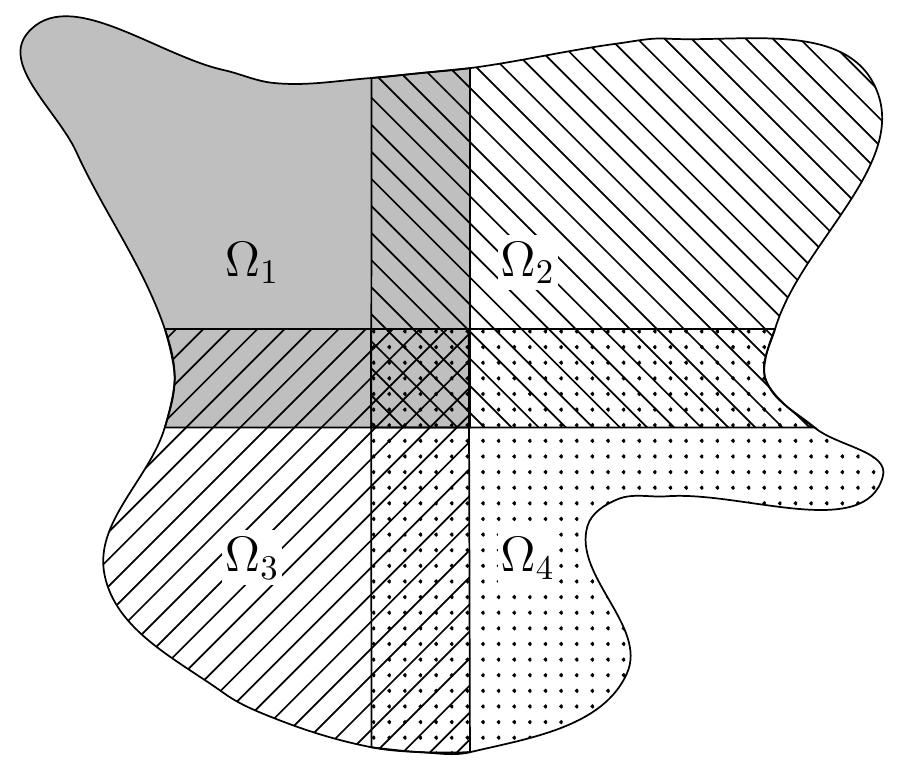}
 \includegraphics[scale=0.6]{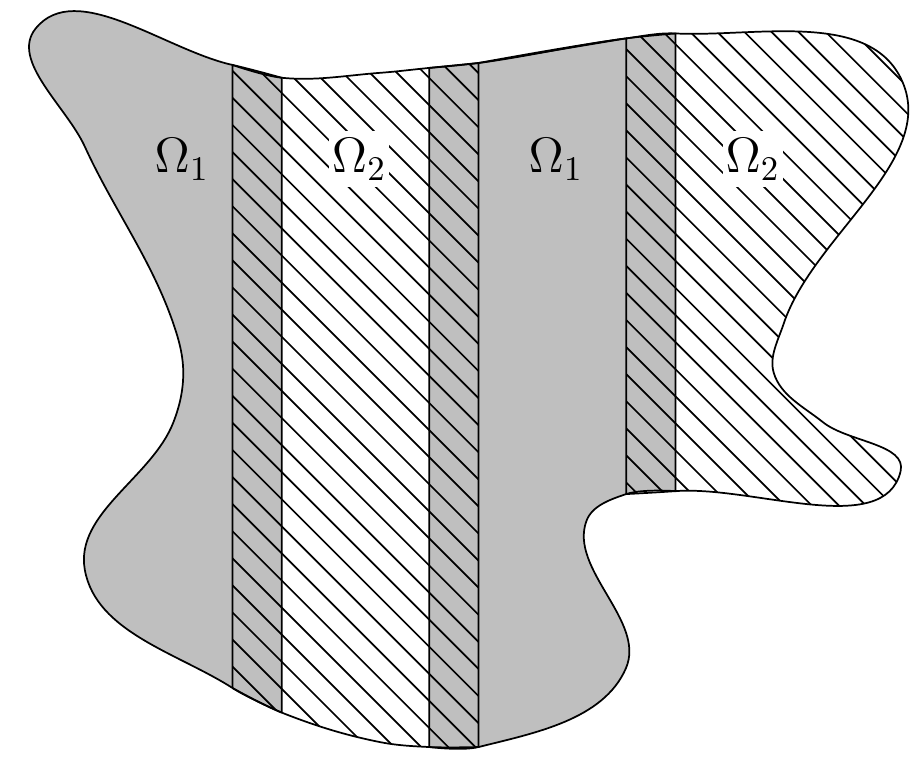}
\caption{Examples of overlapping domain decompositions $\{ \Omega_{\ell} \}_{\ell =1}^{s}$ of a domain $\Omega\subset \R^{2}$,
with $s=4$ subdomains (left) and $s=2$ subdomains that are further decomposed into families of pairwise disjoint sets (right), respectively.}
\label{fig:dom}
\end{figure}

In order to remedy this, we propose to directly introduce the domain decomposition in the time integrator via an operator splitting procedure. More precisely, let~$\{ \Omega_{\ell} \}_{\ell =1}^{s}$ be an overlapping decomposition of the spatial domain $\Omega$, as exemplified in Figure~\ref{fig:dom}. On these subdomains we introduce the partition of unity $\{ \chi_{\ell} \}_{\ell =1}^{s}$ and the operator decomposition, or splitting, 
\begin{equation} \label{eq:opdecomp}
fu= \nabla\cdot \bigl(D(u,\nabla u)\nabla u\bigr)= \sum_{\ell=1}^{s}\nabla\cdot \bigl(\chi_{\ell} D(u,\nabla u)\nabla u\bigr) = \sum_{\ell=1}^{s} f_{\ell}u.
\end{equation}
Two possible (formally) first-order integrators are then the sum splitting 
\begin{equation}\label{schemes:sumpre}
\left\{
\begin{aligned}
&v_{\ell}  =u_{n}+ sh f_{\ell} v_{\ell}, \quad \ell=1,\ldots, s,\\
& u_{n+1} = \sfrac1s\sum_{\ell=1}^{s} v_{\ell},
\end{aligned}
\right.
\end{equation}
which represents a ``quick and dirty'' scheme that is straightforward to parallelize, and the Lie splitting 
\begin{equation}\label{schemes:liepre}
\left\{
\begin{aligned}
& v_{0}     =u_{n},\\
& v_{\ell}   =v_{\ell-1}+ h f_{\ell} v_{\ell}, \quad \ell=1,\ldots, s,\\
& u_{n+1} = v_{s},
\end{aligned}
\right.
\end{equation}
which is usually more accurate but requires a further partitioning of the subdomains $\Omega_\ell$ in order to enable parallelization, as illustrated in Figure~\ref{fig:dom}. In contrast to the earlier domain decomposition based schemes, where an iterative procedure is required with possibly many instances of boundary communications, one time step of either splitting scheme only needs the solution of $s$ elliptic equations together with the communication of the data related to the overlaps. Similar splitting schemes have, e.g., been considered in the papers \cite{Arraras.2015,Hansen.2016,Mathew.1998,Vabishchevich.2013} when applied to linear, and to some extent semilinear, parabolic problems. However, there does not seem to be any analysis applicable to degenerate, or even quasilinear, parabolic equations in the literature.

Hence, the goal of this paper is twofold. First, we aim to derive a new energetic, or variational, framework that allows a proper interpretation of the operator decomposition~\eqref{eq:opdecomp} for two commonly occurring families of degenerate parabolic equations. These are the $p$-Laplace type evolutions, where the prototypical example is given by $D(u,\nabla u)=|\nabla u|^{p-2}$, and the porous medium type equations, where $D(u,\nabla u)=(p-1)|u|^{p-2}$ in the simplest case. For the porous medium application we will use the strategic reformulation 
\begin{equation*} 
fu=\Delta \alpha (u)=\sum_{\ell=1}^{s}\Delta \bigl(\chi_{\ell}\alpha (u)\bigr)=\sum_{\ell=1}^{s} f_{\ell}u
\end{equation*}
of the decomposition~\eqref{eq:opdecomp}, in order to enable an energetic interpretation. 

Secondly, we will strive to obtain a general convergence analysis for the domain decomposition based time integrators, including the sum and Lie splitting schemes. The main idea of the convergence analysis is to introduce the nonlinear Friedrich extensions of the operators $f$ and $f_{\ell}$, via our new abstract energetic framework, and then to employ a Lax-type result from the nonlinear semigroup theory~\cite{BrezisPazy.1972}.

\section{Function spaces}\label{sec:func}

Throughout the analysis $\Omega \subset \R^d$, $d\geq 1$, will be an open, connected and bounded set and the parameter $p\in (1,\infty)$ is fixed. Next, let $\{ \Omega_{\ell} \}_{\ell =1}^{s}$ be a family of overlapping subsets of $\Omega$ such that $\bigcup_{\ell =1}^s \Omega_{\ell} = \Omega$ holds. Here, each $\Omega_{\ell}$ is either an open connected set, or a union of pairwise disjoint open, connected sets $\Omega_{\ell,k}$ such that $\bigcup_{k=1}^{r}\Omega_{\ell,k} = \Omega_{\ell}$. On $\{ \Omega_{\ell} \}_{\ell =1}^{s}$ we introduce the partition of unity $\{ \chi_{\ell} \}_{\ell =1}^{s}\subset C^{\infty}(\Omega)$ such that
\begin{align*}
\chi_{\ell} (x)>0\text{ for all }x\in\Omega_{\ell},\quad \chi_{\ell} (x) = 0\text{ for all }x\in\Omega\setminus\Omega_{\ell}\quad \text{and} \quad \sum_{\ell =1}^{s} \chi_{\ell}= 1.
\end{align*} 
For details on the construction of explicit domain decompositions $\{ \Omega_{\ell} \}_{\ell =1}^{s}$ and partitions of unity $\{ \chi_{\ell} \}_{\ell =1}^{s}$ we refer to \cite[Section~3.2]{Arraras.2015} and \cite[Section~4.1]{Mathew.1998}.

The related weighted Lebesgue space $L^p(\Omega_{\ell},\chi_{\ell})$ can now be defined as the set of all measurable functions $u$ on $\Omega_{\ell}$ such that the norm
\begin{align*}
\|u\|^p_{L^p(\Omega_{\ell},\chi_{\ell})} = \int_{\Omega_{\ell}}\chi_{\ell} |u|^p \dx
\end{align*}
is finite. The space $L^p(\Omega_{\ell},\chi_{\ell})$ is a reflexive Banach space, which follows by observing that the map $G : L^p(\Omega_{\ell},\chi_{\ell}) \to L^p(\Omega_{\ell}):u\mapsto \chi_{\ell}^{\nicefrac 1p} u$ is an isometric isomorphism \cite[Chapter~1]{DrabekEtAl.1997}. We will also make frequent use of the product space $L^p(\Omega_{\ell},\chi_{\ell})^k$, equipped with the norm
\begin{align*}
\|(u_1,\ldots,u_{k})\|_{L^p(\Omega_{\ell},\chi_{\ell})^{k}}^{\Lq}= \int_{\Omega_{\ell}}\chi_{\ell} |(u_1,\ldots,u_{k})|^p \dx,
\end{align*}
which is again a reflexive Banach space~\cite[Theorem 1.23]{AdamsFournier.2003}. 

Next, let $\left(H, \ska{\cdot}{\cdot}_{H}\right)$ be a real Hilbert space and denote the space of distributions on 
$\Omega$  by $\D$. For a given $k\geq 1$ we introduce the linear operator
\begin{align*}
\delta:  H \to \D^k,
\end{align*}
which is assumed to be continuous in the following fashion.
\begin{ass}\label{ass:1}
If $\lim_{n\to\infty} u_{n}=u$ in $H$ then, 
for $j=1,\ldots,k$,
\begin{equation*}
\lim_{n\to\infty} (\delta u_n )_j(\phi) = (\delta u)_{j}(\phi)\quad \text{in } \R\quad \text{for all } \phi \in C_0^{\infty}(\Omega).
\end{equation*}
\end{ass}
As the regularity of the weights $\chi_{\ell}$ implies that $\chi_{\ell}\phi \in C^{\infty}_0(\Omega_\ell)$ for all $\phi \in C^{\infty}_0(\Omega)$, we can define the product $\chi_{\ell}\delta u$ by 
\begin{align*}
(\chi_{\ell}\delta u)_{j}(\phi)=(\delta u)_{j}(\chi_{\ell}\phi)\quad \text{for all } \phi \in C^{\infty}_0(\Omega).
\end{align*}
With this in place we can introduce our energetic spaces $V$ and $V_{\ell}$ as subspaces of $H$ given by
\begin{align*}
V &= \Bigl\{ u \in H:\text{ there exists a } v_j\in L^p(\Omega) \text{ such that }\\
& \qquad \qquad  (\delta u )_j(\phi) = \int_{\Omega} v_j \phi \dx\quad \text{for all } \phi \in C^{\infty}_0(\Omega),\ j=1,\dots,k  \Bigr\}
\end{align*}
and
\begin{align*}
V_{\ell} &= \Bigl\{ u \in H:\text{ there exists a } v_j\in L^p(\Omega_{\ell},\chi_{\ell})\text{ such that }\\
& \qquad \qquad (\chi_{\ell}\delta u )_j(\phi) = \int_{\Omega_{\ell}} v_j \chi_{\ell} \phi \dx\quad\text{for all } \phi \in C^{\infty}_0(\Omega),\ j=1,\dots,k   \Bigr\}, 
\end{align*}
respectively. On the energetic spaces we consider the operators
\begin{align*}
\delta_{p}: V \subseteq H \to L^p(\Omega)^k \quad \text{ and } \quad 
\delta_{p, \ell}: V_{\ell} \subseteq H \to L^p(\Omega_{\ell},\chi_{\ell})^k,
\end{align*}
where $\delta_{p}$ maps $u\in V$ to the corresponding $L^p(\Omega)$ functions that $\delta u$ can be represented by, and $\delta_{p,\ell}$ maps $u\in V_{\ell}$ to the corresponding $L^p(\Omega_{\ell},\chi_{\ell})$ functions that $\chi_{\ell}\delta u$ can be represented by, respectively.

\begin{lemma}\label{lem:Vintersec}
$V = \bigcap_{\ell =1}^s V_{\ell}$.
\end{lemma}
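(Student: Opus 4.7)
The plan is to prove the equality by establishing the two inclusions separately, using the partition-of-unity property $\sum_{\ell=1}^s \chi_\ell = 1$ together with the bound $0\le \chi_\ell \le 1$ (which follows from $\chi_\ell\ge 0$ and the partition-of-unity identity).

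For the easy inclusion $V \subseteq \bigcap_\ell V_\ell$, I would take $u \in V$ with representing functions $v_j \in L^p(\Omega)$. Testing against $\chi_\ell \phi$ (which lies in $C_0^\infty(\Omega)$) yields
\begin{equation*}
(\chi_\ell \delta u)_j(\phi) = (\delta u)_j(\chi_\ell \phi) = \int_\Omega v_j \chi_\ell \phi \dx = \int_{\Omega_\ell} v_j \chi_\ell \phi \dx,
\end{equation*}
since $\chi_\ell$ vanishes outside $\Omega_\ell$. Because $0\le \chi_\ell \le 1$, the restriction $v_j|_{\Omega_\ell}$ satisfies $\int_{\Omega_\ell}\chi_\ell|v_j|^p\dx \le \int_\Omega |v_j|^p\dx<\infty$, so $v_j|_{\Omega_\ell} \in L^p(\Omega_\ell,\chi_\ell)$, and hence $u \in V_\ell$ for every $\ell$.

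For the reverse inclusion, I would take $u \in \bigcap_\ell V_\ell$, producing for each $\ell$ functions $w_j^\ell \in L^p(\Omega_\ell,\chi_\ell)$ representing $\chi_\ell \delta u$. The natural candidate for the representer of $\delta u$ on $\Omega$ is
\begin{equation*}
v_j := \sum_{\ell=1}^s \chi_\ell w_j^\ell,
\end{equation*}
where each summand is extended by zero outside $\Omega_\ell$. Using $0\le \chi_\ell \le 1$ gives $\chi_\ell^p \le \chi_\ell$, so each extended function satisfies
\begin{equation*}
\int_\Omega |\chi_\ell w_j^\ell|^p \dx = \int_{\Omega_\ell} \chi_\ell^p |w_j^\ell|^p \dx \le \int_{\Omega_\ell} \chi_\ell |w_j^\ell|^p \dx < \infty,
\end{equation*}
so $v_j\in L^p(\Omega)$. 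To verify the representation, one applies $\sum_\ell \chi_\ell = 1$, the linearity of $(\delta u)_j$, and the definition of each $w_j^\ell$:
\begin{equation*}
(\delta u)_j(\phi) = \sum_{\ell=1}^s (\delta u)_j(\chi_\ell \phi) = \sum_{\ell=1}^s (\chi_\ell \delta u)_j(\phi) = \sum_{\ell=1}^s \int_{\Omega_\ell} w_j^\ell \chi_\ell \phi \dx = \int_\Omega v_j \phi\dx
\end{equation*}
for all $\phi \in C_0^\infty(\Omega)$, showing $u \in V$.

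The only delicate point is the passage from the weighted integrability $w_j^\ell \in L^p(\Omega_\ell,\chi_\ell)$ to the unweighted membership $\chi_\ell w_j^\ell \in L^p(\Omega)$; this is precisely where the inequality $\chi_\ell^p\le\chi_\ell$ (valid because $\chi_\ell \le 1$ and $p\ge 1$) is essential, and it is the reason the weights $\chi_\ell$ appear quadratically in the definition of $V_\ell$. Everything else is a direct manipulation of the distributional identities and the partition-of-unity sum, so no further subtlety is expected.
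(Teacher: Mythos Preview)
Your proof is correct and follows essentially the same route as the paper: both directions are established via the partition-of-unity identity, with the candidate representer $v_j=\sum_{\ell}\chi_\ell w_j^\ell$ for the nontrivial inclusion. The only cosmetic difference is that the paper bounds $\chi_\ell^p\le \|\chi_\ell\|_{L^\infty(\Omega_\ell)}^{p-1}\chi_\ell$ rather than invoking $\chi_\ell\le 1$ directly, which amounts to the same estimate here since the partition-of-unity condition forces $0\le\chi_\ell\le 1$.
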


\begin{proof}
For an arbitrary $u \in V$ it follows, for $\ell = 1,\dots,s$,  that 
\begin{align*}
(\chi_{\ell}\delta u)_j (\phi) = (\delta u)_j (\chi_{\ell}\phi) = \int_{\Omega} (\delta_p u)_j \chi_{\ell}\phi \dx
\end{align*}
for every $\phi \in C_0^{\infty}(\Omega)$ and $j = 1,\dots,k$. As $(\delta_p u)_j|_{\Omega_{\ell}} \in L^p(\Omega_{\ell}) \subseteq L^p(\Omega_{\ell},\chi_{\ell} )$, we have a representation of $(\delta u)_j$ in $L^p(\Omega_{\ell},\chi_{\ell} )$, i.e., $u \in V_{\ell}$ for every $\ell = 1,\dots, s$. Hence, $V \subseteq \bigcap_{\ell =1}^s V_{\ell}$. 
		
Next, assume that $u\in\bigcap_{\ell =1}^s V_{\ell}$. Then we can write 
\begin{align*}
(\delta u)_j (\phi) = (\delta u)_j \bigl(\sum_{\ell =1}^{s} \chi_{\ell} \phi \bigr) 
	                   = \sum_{\ell =1}^{s} (\delta u)_j \left(  \chi_{\ell} \phi \right) 
	                   = \sum_{\ell =1}^{s} \int_{\Omega_{\ell} }(\delta_{p,\ell} u)_j   \chi_{\ell} \phi \dx
\end{align*}
for every $\phi \in C_0^{\infty}(\Omega)$ and $j =1,\dots,k$. Let $w_{\ell,j}$ be the zero extension of $(\delta_{p,\ell} u)_j$ to the whole of $\Omega$. We can then define the measurable function 
$v_{j}$ on $\Omega$ as $v_j = \sum_{\ell =1}^{s} \chi_{\ell} w_{\ell,j}$, which satisfies
\begin{align*}
(\delta u)_j (\phi)= \int_{\Omega} v_j  \phi \dx\quad\text{for all }\phi \in C_0^{\infty}(\Omega).
\end{align*}
Furthermore, the $L^{p}(\Omega)$ norm of $v_{j}$ can be bounded by
\begin{align*}
\|v_j\|_{L^p(\Omega)} 
	\leq \sum_{\ell=1}^{s} \bigl(\int_{\Omega_{\ell} } \chi_{\ell}^p \left|(\delta_{p,\ell} u)_j\right|^p \dx\bigr)^{\nicefrac{1}{p}}
	\leq \sum_{\ell=1}^{s}  \|\chi_{\ell}\|_{L^\infty(\Omega_{\ell})}^{\nicefrac{(p-1)}{p}} \left\|(\delta_{p,\ell} u)_j\right\|_{L^p(\Omega_{\ell}, \chi_{\ell})}.
\end{align*}
This yields that $(\delta_p u)_j = v_j \in L^p(\Omega)$ for $j =1,\dots,k$, i.e., $u\in V$ and we thereby have the identification $V=\bigcap_{\ell =1}^s V_{\ell}$. \qed
\end{proof}

\begin{lemma}
If Assumption~\ref{ass:1} holds, then the operators $\delta_p$ and $\delta_{p,\ell}$, $\ell=1,\dots,s$, are linear and closed.
\end{lemma}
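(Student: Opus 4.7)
The plan is to handle linearity essentially for free and to reduce closedness in both cases to the distributional continuity granted by Assumption~\ref{ass:1}, combined with a H\"older estimate against test functions. Linearity of $\delta_p$ and $\delta_{p,\ell}$ is immediate from the linearity of $\delta$ and the uniqueness of an $L^p$ (respectively weighted $L^p$) representative of a given distribution, which ensures that both operators are well-defined single-valued maps that inherit linearity from $\delta$.

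For closedness of $\delta_p$, I would pick a sequence $\seq{u}\subset V$ with $u_n\to u$ in $H$ and $\delta_p u_n\to v$ in $L^p(\Omega)^k$, and show that $u\in V$ with $\delta_p u=v$. For every $\phi\in C_0^\infty(\Omega)$ and $j=1,\dots,k$, Assumption~\ref{ass:1} yields $(\delta u_n)_j(\phi)\to(\delta u)_j(\phi)$. On the other hand, by the defining property of $\delta_p u_n$, one has $(\delta u_n)_j(\phi)=\int_\Omega (\delta_p u_n)_j\phi\dx$, and since $\phi\in L^{p'}(\Omega)$ because $\phi$ has compact support and is bounded, H\"older's inequality gives $\int_\Omega (\delta_p u_n)_j\phi\dx\to\int_\Omega v_j\phi\dx$. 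Matching the two limits identifies $(\delta u)_j$ with the element $v_j\in L^p(\Omega)$, so $u\in V$ and $\delta_p u=v$, as required.

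For $\delta_{p,\ell}$ I would rerun exactly the same argument, now applied to the distributions $\chi_\ell\delta u_n$ and tested against $\phi\in C_0^\infty(\Omega)$. The identity $(\chi_\ell\delta u_n)_j(\phi)=(\delta u_n)_j(\chi_\ell\phi)$ together with Assumption~\ref{ass:1} gives $(\chi_\ell\delta u_n)_j(\phi)\to(\chi_\ell\delta u)_j(\phi)$. To pass to the limit on the other side of the defining integral relation, I would apply H\"older with respect to the weighted measure $\chi_\ell\dx$: since $\phi$, $\chi_\ell$, and $\Omega_\ell$ are all bounded, $\phi$ lies in $L^{p'}(\Omega_\ell,\chi_\ell)$, hence $\int_{\Omega_\ell}(\delta_{p,\ell}u_n)_j\chi_\ell\phi\dx\to\int_{\Omega_\ell}v_j\chi_\ell\phi\dx$. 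Comparing limits yields $u\in V_\ell$ and $\delta_{p,\ell}u=v$.

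I do not expect a serious obstacle. The only point that requires any care is the last H\"older step in the weighted setting, namely that $\phi\in C_0^\infty(\Omega)$ is admissible as a test function in the dual pairing of $L^p(\Omega_\ell,\chi_\ell)$; this follows from the boundedness of the weight $\chi_\ell$ together with the compact support and boundedness of $\phi$, and is really just bookkeeping.
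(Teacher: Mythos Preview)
Your proposal is correct and follows essentially the same route as the paper: linearity is inherited from $\delta$, and closedness is obtained by matching the distributional limit from Assumption~\ref{ass:1} against the limit of the defining integral representation. The only cosmetic difference is that you spell out the H\"older step to justify $\int (\delta_{p,\ell}u_n)_j\chi_\ell\phi\dx\to\int v_j\chi_\ell\phi\dx$, whereas the paper records this limit without comment.
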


\begin{proof}
The linearity of the operators is clear, since $\delta$ is a linear operator.  
Let the sequence $\seq{u}\subset V_{\ell}$ satisfy
\begin{align*}
\lim_{n\to\infty} u_{n}=u\quad\text{in }H \quad \text{and} \quad
\lim_{n\to\infty} \delta_{p,\ell} u_n=v\quad\text{in }L^p(\Omega_{\ell},\chi_{\ell})^k.
\end{align*} 
Assumption~\ref{ass:1} then yields that 
\begin{align*}
(\chi_{\ell} \delta u)_j (\phi) = \lim_{n\to \infty} (\delta u_n)_j (\chi_{\ell}\phi)	
= \lim_{n\to \infty } \int_{\Omega_{\ell}} (\delta_{p,\ell} u_n)_j \chi_{\ell} \phi	 \dx 
= \int_{\Omega_{\ell}} v_j \chi_{\ell} 	\phi \dx
\end{align*}
for every $\phi \in C_0^{\infty}(\Omega)$ and $j=1,\dots,k$. Hence, $(\chi_{\ell}\delta u)_j$ can be represented by the 
$L^p(\Omega_{\ell},\chi_{\ell})$ function $v_j$, i.e., $\delta_{p,\ell} u = v$ holds and the operator $\delta_{p, \ell}$ is therefore closed. The closedness of $\delta_{p}$ follows by the same line of reasoning. 
\qed
\end{proof}

On the energetic spaces $V$ and $V_{\ell}$, $\ell =1,\dots,s$, we define the norms
\begin{align*}
\|\cdot\|_{V}= \|\cdot\|_H + \|\delta_{p}\cdot\|_{L^p(\Omega)^k}\quad\text{and}\quad
\|\cdot\|_{V_{\ell}}= \|\cdot\|_H + \| \delta_{p, \ell}\cdot \|_{L^p(\Omega_{\ell},\chi_{\ell})^k},
\end{align*}
respectively. 

\begin{lemma}
If Assumption~\ref{ass:1} holds, then the spaces $(V, \|\cdot\|_V)$ and $(V_{\ell}, \|\cdot\|_{V_{\ell}})$, $\ell =1,\dots,s$, are reflexive Banach spaces.
\end{lemma}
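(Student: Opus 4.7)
The plan is to realize each $V_\ell$ as a closed subspace of a product of reflexive Banach spaces via the graph map, and then invoke the standard facts that products of reflexive spaces are reflexive and that closed subspaces inherit reflexivity.

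First, I would introduce the product space $W_\ell = H\times L^p(\Omega_\ell,\chi_\ell)^k$, equipped with the norm $\|(u,v)\|_{W_\ell}=\|u\|_H+\|v\|_{L^p(\Omega_\ell,\chi_\ell)^k}$. Since $H$ is a Hilbert space (hence reflexive) and $L^p(\Omega_\ell,\chi_\ell)^k$ has already been identified as a reflexive Banach space in the preceding discussion, the product $W_\ell$ is a reflexive Banach space under the chosen (equivalent) sum norm. Next, consider the graph map
\begin{equation*}
J_\ell : V_\ell \to W_\ell, \qquad J_\ell u = (u,\delta_{p,\ell}u).
\end{equation*}
By construction, $\|J_\ell u\|_{W_\ell}=\|u\|_{V_\ell}$, so $J_\ell$ is an isometry onto its image $J_\ell(V_\ell)$.

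The key step is to observe that $J_\ell(V_\ell)$ is closed in $W_\ell$. Indeed, if $(u_n,\delta_{p,\ell}u_n)\to (u,v)$ in $W_\ell$, then $u_n\to u$ in $H$ and $\delta_{p,\ell}u_n\to v$ in $L^p(\Omega_\ell,\chi_\ell)^k$, so the closedness of $\delta_{p,\ell}$ (just established in the previous lemma) implies $u\in V_\ell$ and $\delta_{p,\ell}u=v$, giving $(u,v)\in J_\ell(V_\ell)$. Therefore $J_\ell(V_\ell)$ is a closed subspace of the reflexive Banach space $W_\ell$, hence itself a reflexive Banach space. Transporting this back through the isometry $J_\ell$ shows that $(V_\ell,\|\cdot\|_{V_\ell})$ is a reflexive Banach space.

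The argument for $(V,\|\cdot\|_V)$ is identical with $L^p(\Omega_\ell,\chi_\ell)^k$ replaced by $L^p(\Omega)^k$ and $\delta_{p,\ell}$ by $\delta_p$, using the closedness of $\delta_p$ from the previous lemma. There is no real obstacle here; the only thing to be slightly careful about is choosing an equivalent norm on the product space so that the graph map is truly an isometry rather than just a topological embedding, but this is a cosmetic matter since reflexivity is invariant under renorming with an equivalent norm.
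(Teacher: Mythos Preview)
Your proof is correct and follows essentially the same approach as the paper: embed $V_\ell$ isometrically into the reflexive product space $H\times L^p(\Omega_\ell,\chi_\ell)^k$ via the graph map, use closedness of $\delta_{p,\ell}$ to identify the image as a closed subspace, and conclude reflexivity. The paper's argument is identical up to notation.
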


\begin{proof}
Consider the reflexive Banach space $X=H\times L^p(\Omega_{\ell},\chi_{\ell})^k$,
equipped with the norm $\|(u_{1},u_{2})\|_{X}=\|u_{1}\|_{H}+\|u_{2}\|_{L^p(\Omega_{\ell},\chi_{\ell})^k}$,
and introduce the linear and isometric operator
\begin{align*}
G:V_{\ell}\to X:u\mapsto (u,\delta_{p,\ell}u).
\end{align*}
The graph of the closed operator $\delta_{p,\ell}$ coincides with the image $G(V_{\ell})$, which makes $G(V_{\ell})$ a closed linear subset of $X$. Here, $(G(V_{\ell}),\|\cdot\|_{X})$ is a reflexive Banach space \cite[Theorem 1.22]{AdamsFournier.2003} and, as $G$ is isometric, it is isometrically isomorphic to $(V_{\ell},\|\cdot\|_{V_{\ell}})$. Hence, the latter is also a reflexive Banach space. The same line of argumentation yields that $V$ is a reflexive Banach space.
\qed
\end{proof}

Hereafter, we will assume the following.
\begin{ass}\label{ass:2}
The set $V$ is dense in $H$.
\end{ass}
Under this assumption it also holds that $V_{\ell}$ is a dense subsets of $H$. By the construction of the energetic norms, one then obtains that the reflexive Banach spaces $(V, \|\cdot\|_V)$ and $(V_{\ell}, \|\cdot\|_{V_{\ell}})$ are densely and continuously embedded in $H$ and we have the following Gelfand triplets
\begin{align*}
V \overset{d}{\incl} H \cong H^* \overset{d}{\incl}  V^*
\quad \text{and}\quad 
V_{\ell} \overset{d}{\incl} H \cong H^* \overset{d}{\incl} V^*_{\ell}.
\end{align*}
Here, the density of  $H^*$ in $V^*$ and $V^*_{\ell}$, respectively, follows, e.g., by~\cite[Bemerkung I.5.14]{GGZ.1974}. For future reference, we denote the dual pairing between a Banach space $X$ and its dual $X^*$ by $\dualX{\cdot}{\cdot}$, 
and the Riesz isomorphism from $H$ to $H^{*}$ by
\begin{align*}
\gamma: H \to H^*: u\mapsto \ska{u}{\cdot}_{H}.
\end{align*}
Here, the Riesz isomorphism satisfies the relations
\begin{align*}
\dualV{\gamma u}{v}=\ska{u}{v}_{H}\quad\text{and}
\quad\dualVi{\gamma u}{v_{\ell}}=\ska{u}{v_{\ell}}_{H}
\end{align*}
for all $u\in H$, $v\in V$ and $v_{\ell}\in V_{\ell}$. 

\begin{remark}\label{rem:hm1}
Throughout the derivation of the energetic framework we have assumed that the partition of unity $\{ \chi_{\ell} \}_{\ell =1}^{s}$ consists of elements in $C^{\infty}(\Omega)$. This is somewhat restrictive from a numerical point of view, but this regularity is required if nothing else is known about the operator $\delta: H \to \D^k$. Fortunately, in concrete examples; see Sections~\ref{sec:pLap} and~\ref{sec:por}, one commonly has that $\delta(H)\subseteq H^{-1}(\Omega)^k$. If we then choose a partition of unity $\{ \chi_{\ell} \}_{\ell =1}^{s}$ in $W^{1,\infty}(\Omega)$, we have the property that $\chi_{\ell}\phi\in H^{1}_{0}(\Omega)$  for every $\phi\in H^{1}_{0}(\Omega)$, and we can once more derive the above energetic setting by testing with functions $\phi$ in $H^{1}_{0}(\Omega)$, instead of in $C^{\infty}_{0}(\Omega)$.
\end{remark}

\section{Energetic extensions of the vector fields}\label{sec:enform}

With the function spaces in place, we are now able to define the general energetic extensions of our vector fields.
\begin{ass}\label{ass:3}
For a fixed $p\in(1,\infty)$, let $\alpha: \Omega \times \R^k \to \R^k$ fulfill the properties below.
\begin{itemize}
\item[$\alpha_{1})$] The map $\alpha: \Omega \times \R^k \to \R^k$ fulfills the Carath\'{e}odory condition, i.e., $z \mapsto \alpha(x,z)$ is continuous for a.e.\ $x\in \Omega$ and $x \mapsto \alpha(x,z)$ is measurable for every $z\in \R^k$.
\item[$\alpha_{2})$] The growth condition $|\alpha(x,z)| \leq c_1 |z|^{p-1} +c_2(x) $ holds for a.e.\ $x\in \Omega$ and every $z\in \R^k$, where $c_1>0$ and $c_2\in L^{\nicefrac{p}{(p-1)}}(\Omega)$ is nonnegative.
\item[$\alpha_{3})$] The map $\alpha$ is monotone, i.e., for every $z,\tilde{z} \in \R^k$ and a.e.\ $x\in \Omega$ the inequality $(\alpha(x,z) - \alpha(x,\tilde{z}))\cdot(z - \tilde{z}) \geq 0 $ holds. 
\item[$\alpha_{4})$] The map $\alpha$ is coercive, i.e., there exists $c_3>0$ and $c_4\in L^1(\Omega)$ such that for every $z\in \R^k$ and a.e.\ $x\in \Omega$ the condition $\alpha(x,z) \cdot z \geq c_3 |z|^p - c_4(x)$ holds.
\end{itemize}
\end{ass}
Compare with~\cite[Section 26.3]{Zeidler.1989}.

We introduce the full energetic operator $F : V \to V^*$ as
\begin{align*}
\dualV{Fu}{v} = \int_{\Omega} \alpha(\delta_p u ) \cdot \delta_p v \dx\quad\text{for }u,v\in V.
\end{align*}
The operator $F$ is well defined, as $\delta_p v \in L^p(\Omega)^k$ for $v\in V$ and by ($\alpha_{2}$) 
we obtain that $\alpha(\delta_p v) \in  L^{\nicefrac{p}{(p-1)}}(\Omega)^k \cong \left(L^{p}(\Omega)^k\right)^*$.
Furthermore, we define the decomposed energetic operators $F_{\ell} : V_{\ell} \to V^*_{\ell}$, $\ell =1,\dots, s$, by
\begin{align*}
\dualVi{F_{\ell} u}{v} = \int_{\Omega_{\ell}} \chi_{\ell} \alpha(\delta_{p, \ell} u ) \cdot \delta_{p, \ell}v \dx\quad\text{for all }u,v\in V_{\ell}.
\end{align*}
These operators are well defined, as
\begin{align*}
|\dualVi{F_{\ell} u& }{v}| 
\leq \int_{\Omega_{\ell}} \chi_{\ell} (c_{1}|\delta_{p, \ell} u |^{p-1} +c_{2}) |\delta_{p, \ell}v|\dx\\
&\leq \bigl(c_{1}\bigl(\int_{\Omega_{\ell}} \chi_{\ell}|\delta_{p, \ell}u|^p\dx\bigr)^{\nicefrac{(p-1)}{p}}
                +\bigl(\int_{\Omega_{\ell}} \chi_{\ell}c_{2}^{\nicefrac{p}{(p-1)}}\dx\bigr)^{\nicefrac{(p-1)}{p}}\bigr)
\bigl(\int_{\Omega_{\ell}} \chi_{\ell} |\delta_{p, \ell}v|^p \dx\bigr)^{\nicefrac{1}{p}}
\end{align*}
is finite for every $u,v\in V_{\ell}$, due to ($\alpha_{2}$). This family of operators is a decomposition of $F$, as it fulfills
\begin{align*}
\dualV{Fu}{v}=\sum_{\ell =1}^{s} \dualVi{F_{\ell}u}{v}\quad\text{for all }u,v\in V.
\end{align*}
We can now derive the basic properties of the energetic operators.

\begin{lemma}\label{lem:energy}
If the Assumptions \ref{ass:1}--\ref{ass:3} hold and $h>0$ , then the operators $\gamma + hF: V \to V^*$ and 
$\gamma+ hF_{\ell}: V_{\ell} \to V^*_{\ell}$, $\ell=1,\ldots,s$, are strictly monotone, hemicontinuous and coercive.
\end{lemma}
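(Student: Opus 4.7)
My approach is to verify each of the three properties for the full operator $\gamma+hF$ on $V$ first, and then to explain why the same reasoning carries over verbatim to each decomposed operator $\gamma+hF_\ell$ on $V_\ell$.

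For \emph{strict monotonicity}, I would exploit the Riesz-type relations established just before the remark, namely $\dualV{\gamma u}{v} = \ska{u}{v}_H$, to get
\begin{equation*}
\dualV{\gamma u-\gamma v}{u-v} = \|u-v\|_H^2.
\end{equation*}
For the $F$-part I would test with $u-v$ and invoke the pointwise monotonicity ($\alpha_3$) of $\alpha$ (applied to the values $\delta_p u$ and $\delta_p v$) to deduce $\dualV{Fu-Fv}{u-v}\geq 0$. Adding these and using $h>0$ gives $\dualV{(\gamma+hF)u-(\gamma+hF)v}{u-v}\geq\|u-v\|_H^2$, and the continuous (hence injective) embedding $V\hookrightarrow H$ then upgrades this to a strict inequality whenever $u\neq v$ in $V$.

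For \emph{hemicontinuity}, fix $u,v,w\in V$ and consider $\phi(t)=\dualV{(\gamma+hF)(u+tv)}{w}$ for $t\in\R$. The $\gamma$ contribution is affine in $t$, so I focus on $t\mapsto\int_\Omega\alpha(\delta_p(u+tv))\cdot\delta_p w\dx$. As $t\to t_0$, the Carath\'eodory property ($\alpha_1$) gives pointwise a.e. convergence of the integrand, while the growth bound ($\alpha_2$) supplies a uniform majorant on any bounded $t$-interval, roughly of the form $\bigl(c_1(|\delta_p u|+|t_0+1||\delta_p v|)^{p-1}+c_2\bigr)|\delta_p w|$, which lies in $L^1(\Omega)$ by H\"older's inequality since $\delta_p u,\delta_p v,\delta_p w\in L^p(\Omega)^k$ and $c_2\in L^{p/(p-1)}(\Omega)$. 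Dominated convergence then yields continuity of $\phi$.

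For \emph{coercivity}, the Riesz relation gives $\dualV{\gamma u}{u}=\|u\|_H^2$, while ($\alpha_4$) yields
\begin{equation*}
\dualV{Fu}{u}=\int_\Omega\alpha(\delta_p u)\cdot\delta_p u\dx\geq c_3\|\delta_p u\|_{L^p(\Omega)^k}^p-\|c_4\|_{L^1(\Omega)}.
\end{equation*}
Combining and dividing by $\|u\|_V=\|u\|_H+\|\delta_p u\|_{L^p(\Omega)^k}$, I have to show that $(\|u\|_H^2+hc_3\|\delta_p u\|_{L^p}^p-h\|c_4\|_{L^1})/(\|u\|_H+\|\delta_p u\|_{L^p})\to\infty$ as $\|u\|_V\to\infty$. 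This is the only mildly delicate point and I would handle it by a short case split: if $\|u\|_H\to\infty$ the quadratic term dominates the denominator and produces a bound of order $\|u\|_H$, while if $\|\delta_p u\|_{L^p}\to\infty$ the superlinearity $p>1$ of the $L^p$ term produces a bound of order $\|\delta_p u\|_{L^p}^{p-1}$; since at least one of the two quantities must diverge, the quotient diverges in every case.

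The arguments for $\gamma+hF_\ell$ are identical after replacing $\delta_p$ with $\delta_{p,\ell}$ and inserting the weight $\chi_\ell$ in each integral. Strict monotonicity is unchanged since $\chi_\ell\geq 0$ on $\Omega_\ell$, hemicontinuity uses that $\chi_\ell c_2^{p/(p-1)}\in L^1(\Omega_\ell)$ (because $\chi_\ell\in L^\infty(\Omega)$ and $c_2\in L^{p/(p-1)}(\Omega)$), and coercivity follows from the weighted version of ($\alpha_4$), noting that $\int_{\Omega_\ell}\chi_\ell|\delta_{p,\ell}u|^p\dx=\|\delta_{p,\ell}u\|_{L^p(\Omega_\ell,\chi_\ell)^k}^p$ directly matches the $V_\ell$-seminorm. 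I expect the main obstacle to be a clean unified write-up of the coercivity case split; the monotonicity and hemicontinuity parts reduce to standard Nemytskii-operator arguments under assumptions ($\alpha_1$)--($\alpha_3$).
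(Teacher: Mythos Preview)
Your proposal is correct and follows essentially the same approach as the paper: strict monotonicity via $(\alpha_3)$ plus the $\|u-v\|_H^2$ contribution from $\gamma$, hemicontinuity via dominated convergence using $(\alpha_1)$ for pointwise convergence and $(\alpha_2)$ for the majorant, and coercivity via $(\alpha_4)$. The only cosmetic difference is in the coercivity step: instead of your case split, the paper pulls out a factor $\min(1,c_3 h)$ and bounds $\bigl(\|u\|_H^2+\|\delta_{p,\ell}u\|_{L^p}^p\bigr)/\bigl(\|u\|_H+\|\delta_{p,\ell}u\|_{L^p}\bigr)$ directly, which avoids the write-up issue you anticipated; also, the paper carries out the argument for $\gamma+hF_\ell$ and then remarks that $\gamma+hF$ is handled identically, rather than the other way around.
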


\begin{proof} 
We will only derive the properties for $\gamma + hF_{\ell}$, as the same argumentation holds for $\gamma + hF$. The strict monotonicity of the operator follows using ($\alpha_{3}$), as
\begin{align*}
\dualVi{(\gamma+ hF_{\ell})u - &(\gamma + hF_{\ell})v}{u-v}= \\
                          & \ska{u-v}{u-v}_{H} + h\int_{\Omega_{\ell}} \chi_{\ell} 
                             \bigl(\alpha(\delta_{p, \ell} u) - \alpha(\delta_{p, \ell} v)  \bigr) \cdot \delta_{p, \ell} (u-v) \dx> 0
\end{align*}
holds for all $u,v\in V_{\ell}$ with $u\neq v$.

Next, we prove that $F_{\ell}$ is hemicontinuous, i.e., $t \mapsto \dualVi{F_{\ell}(u +tv)}{w}$ is continuous on $[0,1]$ for $u,v,w\in V_{\ell}$. Consider a sequence $\seq{t}$ in $[0,1]$ with limit $t$ and introduce
\begin{align*}
g(t,x)= \chi_{\ell}(x) \alpha\bigr(x,(\delta_{p, \ell} u +t\delta_{p, \ell}v) (x)\bigr) \cdot \delta_{p, \ell} w(x).
\end{align*}
As $\lim_{n\to \infty } g(t_{n},x)=g(t,x)$ holds for almost every $x\in \Omega_{\ell}$, due to ($\alpha_{1}$), and
\begin{align*}
|g(t,x)|\leq \chi_{\ell}(x) \bigl( c_{1} \bigl(|\delta_{p, \ell} u(x)|+|\delta_{p, \ell} v(x)|\bigr)^{p-1}+c_{2}(x)\bigr)
|\delta_{p, \ell} w(x)|,
\end{align*}
where the right-hand side is an $L^1(\Omega_{\ell})$ element, we obtain that
\begin{align*}
\lim_{n\to \infty} \dualVi{F_{\ell}(u +t_nv)}{w}	
    &= \lim_{n\to \infty}\int_{\Omega_{\ell}}\chi_{\ell} \alpha(\delta_{p, \ell} (u +t_nv)  ) \cdot \delta_{p, \ell} w \dx\\
    &= \dualVi{F_{\ell}(u +tv)}{w},
\end{align*}
by the dominated convergence theorem. This implies that $F_{\ell}$ is hemicontinuous, 
and the same trivially holds for $\gamma + hF_{\ell}$.
	
Last, we prove the coercivity of $\gamma + hF_{\ell}$. By assumption ($\alpha_{4}$), we have\begin{align*}
\dualVi{(\gamma + hF_{\ell}) u }{u} 
	&= \ska{u}{u}_{H} + h \int_{\Omega_{\ell}} \chi_{\ell} \alpha(\delta_{p, \ell} u  ) \cdot \delta_{p, \ell} u \dx \\
	&\geq  \|u\|_H^2 + h \int_{\Omega_{\ell}} \chi_{\ell}(c_3|\delta_{p, \ell} u |^p -c_{4})\dx\\
	&\geq\|u\|_H^2 + c_3h  \|\delta_{p, \ell} u\|_{L^p(\Omega_{\ell},\chi_{\ell})^k}^p - h\|\chi_{\ell}\|_{L^\infty(\Omega_{\ell})}\|c_4\|_{L^1(\Omega_{\ell})}
\end{align*} 
for every $u \in V_{\ell}$. Hence, we have the limit
\begin{align*}
\frac{\dualVi{(\gamma + hF_{\ell}) u }{u}}{\|u\|_{V_{\ell}}}\geq  
\min(1,c_{3}h)\frac{\|u\|_H^2 + \|\delta_{p, \ell} u\|_{L^p(\Omega_{\ell},\chi_{\ell})^k}^p}{\|u\|_H 
+ \|\delta_{p, \ell} u\|_{L^p(\Omega_{\ell},\chi_{\ell})^k} } 
- \frac{c(\chi_{\ell},c_{4})}{\|u\|_{V_{\ell}}}\to \infty,
\end{align*}
as $\|u\|_{V_{\ell}} \to \infty$, which implies the coercivity of $\gamma + hF_{\ell}$. \qed
\end{proof}

\begin{corollary}\label{cor:energy}
If the Assumptions \ref{ass:1}--\ref{ass:3} hold and $h>0$ , then the operators $\gamma + hF: V \to V^*$ and 
$\gamma+ hF_{\ell}: V_{\ell} \to V^*_{\ell}$, $\ell=1,\ldots,s$, are all bijective.
\end{corollary}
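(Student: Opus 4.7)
The plan is to obtain the corollary as an immediate consequence of the previous lemma combined with a classical surjectivity result for monotone operators on reflexive Banach spaces. I will treat only $\gamma+hF_\ell$ in detail, since the argument for $\gamma+hF$ is identical once one recalls that $V$ is reflexive by the preceding lemma.

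First I would record injectivity: strict monotonicity of $\gamma+hF_\ell$ already gives this directly, since if $(\gamma+hF_\ell)u=(\gamma+hF_\ell)v$ then testing the difference against $u-v$ and invoking the strict inequality in the previous lemma forces $u=v$. So the only substantive task is surjectivity.

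For surjectivity I would invoke the Browder--Minty theorem in the form stated, e.g., in \cite[Theorem 26.A]{Zeidler.1989}: every monotone, hemicontinuous, coercive operator from a real reflexive Banach space into its dual is surjective. By the previous lemma, $\gamma+hF_\ell:V_\ell\to V_\ell^*$ is strictly monotone (hence monotone), hemicontinuous, and coercive; by the reflexivity lemma proved earlier, $(V_\ell,\|\cdot\|_{V_\ell})$ is a reflexive Banach space. All hypotheses of the theorem are therefore met, and surjectivity follows. Combined with the injectivity noted above, this yields bijectivity of $\gamma+hF_\ell$, and the same argument applied with $V$ in place of $V_\ell$ completes the proof.

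I do not expect any genuine obstacle here: all the analytic work has already been carried out in Lemma~\ref{lem:energy} and in the reflexivity lemma for the energetic spaces, so this corollary is essentially a one-line appeal to the main theorem on monotone operators. The only point worth being careful about is to make sure the hemicontinuity hypothesis is used in the form required by the cited theorem (continuity of $t\mapsto \langle(\gamma+hF_\ell)(u+tv),w\rangle$ on $[0,1]$ for all $u,v,w\in V_\ell$), which is precisely what the previous lemma establishes.
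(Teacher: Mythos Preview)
Your proposal is correct and follows essentially the same approach as the paper: the paper's proof is a one-line appeal to the Browder--Minty theorem \cite[Theorem 26.A]{Zeidler.1989}, using the strict monotonicity, hemicontinuity, and coercivity established in Lemma~\ref{lem:energy} together with the reflexivity of $V$ and $V_\ell$. Your separation of injectivity (via strict monotonicity) from surjectivity is slightly more explicit than the paper, but the argument is the same.
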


\begin{proof}
As  $\gamma + hF: V \to V^*$ and $\gamma+ hF_{\ell}: V_{\ell} \to V^*_{\ell}$ are all, by Lemma~\ref{lem:energy}, 
strictly monotone, hemicontinuous and coercive, their bijectivity follows by the Browder--Minty theorem; 
see, e.g., \cite[Theorem 26.A]{Zeidler.1989}.\qed
\end{proof}

\section{Friedrich extensions of the vector fields}\label{sec:frform}

The energetic setting is too general for the convergence analysis that we have in mind. We therefore
introduce the nonlinear Friedrich extensions of our vector fields, i.e., we restrict the domains of the energetic operators such that they become (unbounded) operators on the pivot space $H$. More precisely, we define the Friedrich extension $f: \dom(f) \subseteq H \to H$ of the full vector field by
\begin{align*} 
\dom (f) = \{u \in V : F u \in H^* \}\quad\text{and}
\quad f u =  -\gamma^{-1} Fu\quad\text{for } u\in\dom (f).
\end{align*}
Analogously, we introduce the Friedrich extensions $f_{\ell}: \dom(f_{\ell}) \subseteq H \to H$, $\ell =1,\dots,s$, of the decomposed vector fields by
\begin{align*} 
\dom (f_{\ell}) = \{u \in V_{\ell} : F_{\ell} u \in H^* \}\quad\text{and}
\quad f_{\ell} u = -\gamma^{-1} F_{\ell}u\quad\text{for  } u\in\dom (f_{\ell}).
\end{align*}

\begin{lemma} \label{lem:friedrich}
If the Assumptions \ref{ass:1}--\ref{ass:3} hold, then the operators $f:\dom(f) \subseteq H \to H$ and $f_{\ell}:\dom(f_{\ell}) \subseteq H \to H$, 
$\ell =1,\dots,s$, are all maximal dissipative.
\end{lemma}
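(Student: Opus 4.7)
The plan is to verify the two defining properties of a maximal dissipative operator, namely dissipativity and the range condition $R(\mathrm{id} - hf) = H$ (respectively $R(\mathrm{id} - hf_\ell) = H$) for some, and thus all, $h>0$. I will only treat $f$; the argument for each $f_\ell$ is verbatim the same with $V_\ell$, $F_\ell$ and the weighted integral in place of $V$, $F$ and the unweighted integral.

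For dissipativity, take $u,v\in \dom(f)$. Since $Fu-Fv\in H^*$ and $\gamma$ is the Riesz isomorphism between $H$ and $H^*$, the pairing relations listed before Remark~\ref{rem:hm1} give
\begin{align*}
\ska{fu-fv}{u-v}_{H}
&= -\ska{\gamma^{-1}(Fu-Fv)}{u-v}_{H}
= -\dualV{Fu-Fv}{u-v}\\
&= -\int_{\Omega}\bigl(\alpha(\delta_p u)-\alpha(\delta_p v)\bigr)\cdot\delta_p(u-v)\dx\leq 0,
\end{align*}
where the last inequality is exactly the monotonicity assumption $(\alpha_3)$. Hence $f$ is dissipative.

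For the range condition, fix $h>0$ and $w\in H$. I want to produce $u\in\dom(f)$ with $u-hfu=w$. Applying $\gamma$ formally turns this into the equation $(\gamma+hF)u=\gamma w$ in $V^*$. Now $\gamma w\in H^*\incl V^*$, and by Corollary~\ref{cor:energy} the operator $\gamma+hF:V\to V^*$ is bijective, so there is a unique $u\in V$ solving $(\gamma+hF)u=\gamma w$. Rearranging gives $Fu=\gamma(w-u)\in H^*$, so by the definition of $\dom(f)$ we indeed have $u\in\dom(f)$. Applying $\gamma^{-1}$ then yields
\begin{align*}
u-hfu = u+h\gamma^{-1}Fu = \gamma^{-1}(\gamma u+hFu) = \gamma^{-1}\gamma w = w,
\end{align*}
so $\mathrm{id}-hf$ is surjective onto $H$. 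Combined with dissipativity, this is the defining property of a maximal dissipative operator.

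The only point where care is needed is the bookkeeping between the Gelfand triplet $V\incl H\cong H^*\incl V^*$ and the Riesz identification, in particular recognising that the equation $u-hfu=w$ posed in $H$ is equivalent, after applying $\gamma$, to the equation $(\gamma+hF)u=\gamma w$ posed in $V^*$; once this is set up, dissipativity falls directly out of $(\alpha_3)$ and surjectivity is a one-line consequence of Corollary~\ref{cor:energy}, so no further analytical input is required.
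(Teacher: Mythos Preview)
Your proof is correct and follows essentially the same route as the paper: dissipativity is read off directly from the monotonicity assumption $(\alpha_3)$, and the range condition $\ran(I-hf)=H$ is obtained by invoking the bijectivity of $\gamma+hF$ from Corollary~\ref{cor:energy} and then observing that the resulting $u$ satisfies $Fu\in H^*$. One tiny slip: rearranging $(\gamma+hF)u=\gamma w$ gives $Fu=\tfrac1h\gamma(w-u)$ rather than $\gamma(w-u)$, but this does not affect the conclusion $Fu\in H^*$ or the subsequent verification that $u-hfu=w$.
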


\begin{proof}
By ($\alpha_{3}$) of Assumption~\ref{ass:3}, we have that
\begin{align*}
\ska{f_{\ell}u - f_{\ell}v}{u - v}_{H} & = -\dualVi{F_{\ell}u-F_{\ell}v}{u-v}\\
&=-\int_{\Omega_{\ell}} \chi_{\ell}\bigl(\alpha(\delta_{p,\ell} u)- \alpha(\delta_{p,\ell} v)\bigr)
\cdot \delta_{p,\ell} (u - v)\dx \leq 0
\end{align*}
for all $u,v\in\dom(f_{\ell})$, i.e., $f_{\ell}$ is dissipative. Next, for given $h>0$ and $v\in H$ one has, in virtue of Corollary~\ref{cor:energy}, that 
there exists a unique $u\in V_{\ell}$ such that $(\gamma+hF_{\ell})u=\gamma v$, or equivalently
\begin{align*}
F_{\ell} u = -\sfrac1h\, \gamma (u-v)\in H^{*}.
\end{align*}
Hence, $u\in\dom(f_{\ell})$ and $(I-hf)u=v$ in $H$, i.e., $\ran(I-hf_{\ell})=H$
and $f_{\ell}$ is therefore maximal. The same argumentation also yields that $f$ is maximal dissipative.
\qed
\end{proof}

Before we continue with our analysis we recapitulate a few properties of a general maximal dissipative 
operator $g:\dom(g)\subseteq H\to H$. The resolvent
\begin{align*}
(I-hg)^{-1}:H\to\dom(g)\subseteq H
\end{align*}
is well defined, for every $h>0$, and nonexpansive, i.e.,
\begin{align*}
\|(I-hg)^{-1}u-(I-hg)^{-1}v\|_{H}\leq\|u-v\|_{H}\quad\text{for all }u,v\in H.
\end{align*}
The latter follows directly by the definition of dissipativity. Furthermore, the resolvent 
and the related Yosida approximation $g(I - hg)^{-1}$ satisfies the following.

\begin{lemma} \label{lem:yosida}
If $g:\dom(g)\subseteq H\to H$ is maximal dissipative, then 
\begin{align*}
\lim_{h\to 0} (I - hg)^{-1}u=u\quad\text{and}\quad\lim_{h\to 0} g(I - hg)^{-1}v= gv
\end{align*}
in $H$ for every $u\in \overline{\dom(g)}$ and $v\in\dom(g)$, respectively.
\end{lemma}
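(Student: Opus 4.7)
The plan is to establish both limits in turn, relying on two ingredients already available: the nonexpansiveness of the resolvent $(I-hg)^{-1}$ and the maximal dissipativity of $g$. For the resolvent convergence, the key algebraic observation for $v \in \dom(g)$ is that $(I-hg)v = v - hgv$, so $(I-hg)^{-1}(v - hgv) = v$. Combining this with nonexpansiveness yields
\begin{equation*}
\|(I-hg)^{-1}v - v\|_H = \|(I-hg)^{-1}v - (I-hg)^{-1}(v - hgv)\|_H \leq h\|gv\|_H,
\end{equation*}
which tends to zero. For general $u \in \overline{\dom(g)}$, I would pick a sequence $\{v_n\} \subset \dom(g)$ with $v_n \to u$ and use nonexpansiveness together with the triangle inequality to bound
\begin{equation*}
\|(I-hg)^{-1}u - u\|_H \leq 2\|u - v_n\|_H + \|(I-hg)^{-1}v_n - v_n\|_H,
\end{equation*}
then choose $n$ large first and $h$ small afterwards.

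For the Yosida approximation, I would fix $v \in \dom(g)$, set $w_h = (I - hg)^{-1} v$, and exploit the identity $g w_h = (w_h - v)/h$, which together with the previous estimate forces $\|g w_h\|_H \leq \|gv\|_H$ uniformly in $h$. Any sequence $h_n \to 0$ then admits a subsequence along which $g w_{h_n} \rightharpoonup \phi$ weakly in $H$, while $w_{h_n} \to v$ strongly by the first part. Passing to the limit in the dissipativity inequality
\begin{equation*}
\ska{g w_{h_n} - g u}{w_{h_n} - u}_H \leq 0, \qquad u \in \dom(g),
\end{equation*}
produces $\ska{\phi - g u}{v - u}_H \leq 0$ for every $u \in \dom(g)$. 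Maximality of $g$ then forces $(v, \phi)$ to lie in the graph of $g$, so that $\phi = g v$. Since every weak cluster point agrees with $g v$, the full net $g (I - hg)^{-1} v$ converges weakly to $g v$. The uniform norm bound then improves, in the limit, to $\|g v\|_H \leq \liminf_{h \to 0} \|g (I - hg)^{-1} v\|_H \leq \limsup_{h \to 0}\|g (I - hg)^{-1} v\|_H \leq \|g v\|_H$, and in the Hilbert space $H$ weak convergence together with convergence of norms yields strong convergence.

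The step I expect to be the main obstacle is the identification of the weak limit via the maximality characterization, since this is the only point where maximal dissipativity (rather than mere dissipativity) is genuinely needed, and it requires careful subsequencing and a clean passage to the limit in the dissipativity inequality. Everything else propagates from the single estimate $\|(I-hg)^{-1}v - v\|_H \leq h\|gv\|_H$ combined with a routine density argument and the Hilbert space identity for strong convergence.
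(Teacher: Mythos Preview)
Your proof is correct. The paper does not actually prove this lemma; it simply cites \cite[Proposition II.3.6]{Barbu.1976} and \cite[Proposition 11.3]{Deimling.1985}. Your argument is essentially the standard textbook proof found in those references: the resolvent estimate $\|(I-hg)^{-1}v-v\|_H\le h\|gv\|_H$ for $v\in\dom(g)$, extended by density and nonexpansiveness to $\overline{\dom(g)}$, followed by the uniform bound $\|g(I-hg)^{-1}v\|_H\le\|gv\|_H$, a weak-compactness/demiclosedness identification of the limit via maximality, and the Hilbert-space upgrade from weak convergence plus norm convergence to strong convergence. The only cosmetic point is the passage from ``every subsequence has a weakly convergent further subsequence with limit $gv$'' to convergence of the full family; this is the routine subsequence principle and poses no difficulty.
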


The proof of Lemma~\ref{lem:yosida} can, e.g., be found in \cite[Proposition II. 3.6]{Barbu.1976} or 
\cite[Proposition 11.3]{Deimling.1985}. Next, we will relate the full vector field $f$ with its decomposition $\sum_{\ell =1}^{s} f_{\ell}$.

\begin{lemma} \label{lem:frieddom}
If the Assumptions \ref{ass:1}--\ref{ass:3} hold, then $\bigcap_{\ell =1}^s \dom(f_{\ell}) \subseteq \dom(f)$ and
$fu = \sum_{\ell =1}^{s} f_{\ell}u$ for every $u\in\bigcap_{\ell =1}^s \dom(f_{\ell})$.
\end{lemma}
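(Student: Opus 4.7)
My plan is to unwind the definitions using the Gelfand triple embeddings and the already-established decomposition identity $\dualV{Fu}{v} = \sum_{\ell=1}^{s}\dualVi{F_{\ell}u}{v}$ for $u,v\in V$.

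First I would fix $u\in\bigcap_{\ell=1}^{s}\dom(f_{\ell})$. Since $\dom(f_{\ell})\subseteq V_{\ell}$, we have $u\in\bigcap_{\ell=1}^{s}V_{\ell}$, and Lemma~\ref{lem:Vintersec} immediately gives $u\in V$. Consequently the expression $\dualV{Fu}{v}$ makes sense for every $v\in V$.

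The main step is to identify $Fu$ as an element of $H^{*}$. For each $\ell$ we have $f_{\ell}u\in H$ and $F_{\ell}u=-\gamma f_{\ell}u\in H^{*}$, so via the embedding $H^{*}\hookrightarrow V_{\ell}^{*}$ and the Riesz relation recalled before the remark,
\begin{align*}
\dualVi{F_{\ell}u}{v}=-\dualVi{\gamma f_{\ell}u}{v}=-\ska{f_{\ell}u}{v}_{H}\quad\text{for every }v\in V.
\end{align*}
Summing over $\ell$ and using the decomposition identity for $F$ on $V$ yields
\begin{align*}
\dualV{Fu}{v}=\sum_{\ell=1}^{s}\dualVi{F_{\ell}u}{v}=-\ska{\sum_{\ell=1}^{s}f_{\ell}u}{v}_{H}=\dualV{-\gamma\bigl(\sum_{\ell=1}^{s}f_{\ell}u\bigr)}{v}
\end{align*}
for every $v\in V$. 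Since $V$ is dense in $H$ and hence separates the dual, this identity in $V^{*}$ forces $Fu=-\gamma\bigl(\sum_{\ell=1}^{s}f_{\ell}u\bigr)$, where the right-hand side belongs to $H^{*}$ embedded into $V^{*}$. Therefore $Fu\in H^{*}$, so $u\in\dom(f)$, and applying $-\gamma^{-1}$ gives $fu=\sum_{\ell=1}^{s}f_{\ell}u$.

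I do not expect a genuine obstacle here; the only subtlety is being careful with the Gelfand triple identifications, namely that an element $\gamma w\in H^{*}$ with $w\in H$ must be interpreted consistently as an element of $V^{*}$ and of each $V_{\ell}^{*}$. Once this bookkeeping is handled, the statement is essentially a direct consequence of Lemma~\ref{lem:Vintersec}, the definition of the Friedrich extensions, and the linearity-in-test-function decomposition of $F$.
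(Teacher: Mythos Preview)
Your proof is correct and follows essentially the same route as the paper's: both use Lemma~\ref{lem:Vintersec} to get $u\in V$, invoke the decomposition $\dualV{Fu}{v}=\sum_{\ell}\dualVi{F_{\ell}u}{v}$, rewrite each term via $F_{\ell}u=-\gamma f_{\ell}u$, and conclude $Fu\in H^{*}$. The only minor remark is that your appeal to density of $V$ in $H$ is unnecessary---two elements of $V^{*}$ that agree on all of $V$ are equal by definition---but this does not affect the validity of the argument.
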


\begin{proof}
Choose a $u\in\bigcap_{\ell =1}^s \dom(f_{\ell})$, then $u\in\bigcap_{\ell =1}^s V_{\ell} = V$ and the sum $z=\sum_{\ell=1}^{s} f_{\ell}u\in H$ satisfies the relation
\begin{align*}
\ska{-z}{v}_{H}=\sum_{\ell =1}^{s} \dualVi{F_{\ell}u}{v}=\dualV{Fu}{v}
\end{align*}
for all $v\in V$. Hence, $Fu\in H ^{*}$, which yields that $u\in\dom(f)$ and $fu=-\gamma^{-1} Fu =z$.
\qed
\end{proof}

Unfortunately, the set $\dom(f)$ is in general not equal to $\bigcap_{\ell =1}^s \dom(f_{\ell})$, as $u\in \dom(f)$ does not necessarily imply that $F_{\ell} u \in H^*$ for every $\ell =1,\dots,s$. This issue is well known and we will encounter it when decomposing the $p$-Laplacian; compare with Section~\ref{sec:pLap}. We will therefore assume that the mild regularity property below holds.
\begin{ass}\label{ass:f1}
$V \subseteq  \ran\bigl( I - h f |_{\bigcap_{\ell =1}^s \dom(f_{\ell})} \bigr)\quad$for all $h>0$.
\end{ass}
Under this assumption one has the following identification, which is sufficient for our convergence analysis.

\begin{lemma}\label{lem:close}
If the Assumptions \ref{ass:1}--\ref{ass:f1} hold, then the closure of $f|_{\bigcap_{\ell =1}^s\dom(f_{\ell})}$ is $f$, i.e., 
\begin{align*}
\overline{\graph\bigl( f|_{\bigcap_{\ell =1}^s \dom(f_{\ell})} \bigr)} = \graph(f).
\end{align*}
\end{lemma}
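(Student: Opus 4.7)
The plan is to establish the two graph inclusions separately. The inclusion $\overline{\graph(f|_{\bigcap_{\ell=1}^s \dom(f_\ell)})} \subseteq \graph(f)$ is immediate: by Lemma~\ref{lem:frieddom}, $f$ agrees with its restriction on $\bigcap_{\ell=1}^s \dom(f_\ell)$, and since $f$ is maximal dissipative by Lemma~\ref{lem:friedrich}, its graph is closed in $H\times H$, so any limit of pairs taken from $\graph(f|_{\bigcap_{\ell=1}^s \dom(f_\ell)})$ lies in $\graph(f)$.

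For the reverse inclusion I would approximate each $(u, fu)\in\graph(f)$ by resolvent-generated elements of the restricted graph. Fix $u\in\dom(f)\subseteq V$ and $h>0$. Assumption~\ref{ass:f1} supplies some $u_h\in\bigcap_{\ell=1}^s\dom(f_\ell)$ with $u_h - hfu_h = u$. Because $f$ is maximal dissipative, the equation $w - hfw = u$ has a unique solution in $\dom(f)$, namely $(I-hf)^{-1}u$, so this preimage must in fact coincide with the Yosida-type term $u_h = (I-hf)^{-1}u$.

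Then I would invoke Lemma~\ref{lem:yosida} with $g = f$ and $v = u\in\dom(f)\subseteq\overline{\dom(f)}$. As $h \to 0^+$, it yields $u_h \to u$ and $f u_h = f(I-hf)^{-1} u \to fu$ in $H$. Since $u_h\in\bigcap_{\ell=1}^s \dom(f_\ell)$, Lemma~\ref{lem:frieddom} ensures $f|_{\bigcap_{\ell=1}^s \dom(f_\ell)} u_h = f u_h$, so the pairs $(u_h,\, f|_{\bigcap_{\ell=1}^s \dom(f_\ell)} u_h)$ belong to $\graph(f|_{\bigcap_{\ell=1}^s \dom(f_\ell)})$ and converge in $H\times H$ to $(u, fu)$. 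Therefore $\graph(f) \subseteq \overline{\graph(f|_{\bigcap_{\ell=1}^s \dom(f_\ell)})}$.

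The crux of the argument is the identification $u_h = (I-hf)^{-1}u$. Assumption~\ref{ass:f1} only guarantees the existence of some preimage in $\bigcap_{\ell=1}^s \dom(f_\ell)$; it is the dissipativity of $f$ that forces uniqueness of solutions to $w - hfw = u$ in $\dom(f)$ and thereby hooks the construction into the Yosida-approximation machinery of Lemma~\ref{lem:yosida}. Once that identification is secured, the remainder is a routine convergence argument and requires no additional regularity on $f$ or on the subdomain decomposition.
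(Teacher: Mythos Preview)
Your proof is correct and follows essentially the same approach as the paper's own argument: one inclusion from the closedness of the maximal dissipative operator $f$, and the reverse inclusion by pulling $u\in\dom(f)\subseteq V$ back through Assumption~\ref{ass:f1} to obtain $u_h=(I-hf)^{-1}u\in\bigcap_{\ell=1}^s\dom(f_\ell)$ and then applying Lemma~\ref{lem:yosida}. Your explicit justification of the identification $u_h=(I-hf)^{-1}u$ via uniqueness of solutions to $w-hfw=u$ is in fact a bit more detailed than the paper, which simply asserts the equality.
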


\begin{proof}
By  Lemma~\ref{lem:frieddom} and the fact that the maximal dissipative operator $f$ is closed~\cite[Proposition II.3.4]{Barbu.1976}, 
we obtain that
\begin{align*}
	\overline{ \graph\bigl(f|_{\bigcap_{\ell =1}^s \dom(f_{\ell})} \bigr)} 
	\subseteq \overline{ \graph(f)}=\graph(f).
\end{align*}
Next, choose an arbitrary $(u,fu) \in \graph(f)$. Since
\begin{align*}
u \in \dom(f) \subseteq V \subseteq  \ran\bigl( I - h f |_{\bigcap_{\ell =1}^s \dom(f_{\ell})} \bigr),
\end{align*}
we can define $v_h \in  \bigcap_{\ell =1}^s\dom(f_{\ell})$ via
\begin{align*}
v_h = (I - hf)^{-1} u = \bigl(I - h f |_{\bigcap_{\ell =1}^s \dom(f_{\ell})} \bigr)^{-1} u
\end{align*}
for every $h>0$.
By Lemma~\ref{lem:yosida}, we have the limits
\begin{align*}
\lim_{h\to 0} v_h = u\quad \text{ and }\quad \lim_{h\to 0} f v_h = \lim_{h\to 0} f(I-hf)^{-1} u  =fu\quad \text{ in } H.
\end{align*}
Hence, the set $\graph\bigl( f|_{\bigcap_{\ell =1}^s \dom(f_{\ell})} \bigr)$ is dense in 
$\graph(f)$, i.e., its closure in $H\times H$ is equal to $\graph(f)$. \qed
\end{proof}

\section{Abstract evolution equations and their approximations}\label{sec:approx}

With the Friedrich formulation of our full vector field $f:\dom(f) \subseteq H \to H$, the parabolic equations 
all take the form of an abstract evolution equations, i.e., 
\begin{equation}\label{eq:evolv}
\dot{u}=fu,\quad u(0)=\eta,
\end{equation}
on $H$. Furthermore, with the decomposition $f=\sum_{\ell =1}^s f_{\ell}$, the splitting schemes 
\eqref{schemes:sumpre} and \eqref{schemes:liepre} are given by the operators
\begin{align*}
S_{h}=\sfrac1s\,\sum_{\ell=1}^{s}\bigl(I-hsf_{\ell}\bigr)^{-1} : H \to H\quad\text{and}\quad
P_{h}= \prod_{\ell=1}^{s}\bigl(I-hf_{\ell}\bigr)^{-1}: H\to H,
\end{align*}
respectively. Here, $S^{n}_{h}\eta$ and $P^{n}_{h}\eta$ are both approximations of the exact solution $u$ at time $t=nh$.

As the resolvent of a maximal dissipative operator is well defined and nonexpansive on $H$, it is a natural starting point for a solution concept. To this end, consider the operator family $\{\ee{tf}\}_{t\geq 0}$ defined by 
\begin{equation*}
\ee{tf}\eta=\lim_{n\to\infty}\bigl(I-\sfrac tn\,f\bigr)^{-n}\eta,
\end{equation*}
where the limit is well defined in $H$ for every $\eta\in\overline{\dom(f)}$ and $t\geq 0$; see \cite[Theorem~I]{CrandallLiggett.1971}. 
The operator family $\{\ee{tf}\}_{t\geq 0}$ is in fact a (nonlinear) semigroup and each $\ee{tf}:\overline{\dom(f)}\to\overline{\dom(f)}$ is a nonexpansive operator on $H$. The unique mild solution of the evolution equation \eqref{eq:evolv} is then given by the function $u:t\mapsto \ee{tf}\eta$, which is continuous on bounded time intervals.  An extensive exposition of the nonlinear semigroup theory can, e.g., be found in \cite{Barbu.1976}.

There is a discrepancy between the domain of the solution operator, i.e., $\dom(\ee{tf})=\overline{\dom(f)}$, and 
the fact that the operators $S_{h}$ and $P_{h}$ are not necessarily invariant over it. In order to
avoid several technicalities induced by this, we will assume the following.
\begin{ass}\label{ass:f2}
The domain $\dom(f)$  is dense in $H$.
\end{ass}
As $f$ is the closure of $f|_{\bigcap_{\ell =1}^s \dom(f_{\ell})}$, one has the inclusions
\begin{equation*}
\dom(f|_{\bigcap_{\ell =1}^s \dom(f_{\ell})})\subseteq D(f)\subseteq\overline{\dom(f|_{\bigcap_{\ell =1}^s \dom(f_{\ell})})},
\end{equation*}
which implies that $\overline{\dom(f)}=\overline{\dom(f|_{\bigcap_{\ell =1}^s \dom(f_{\ell})})}$.
Hence, $\dom(f|_{\bigcap_{\ell =1}^s \dom(f_{\ell})})$ is also dense in $H$  when Assumption~\ref{ass:f2} holds.

We can now formulate the following simplified version of the Lax-type convergence 
result given in~\cite[Corollary~4.3]{BrezisPazy.1972}.

\begin{lemma}\label{lem:BrePaz}
Consider an operator family $\{G_{h}\}_{h>0}$, where each operator $G_{h}:H\to H$ is nonexpansive on $H$
and the operator family is consistent, i.e., 
\begin{align*}
\lim_{h\to 0} \sfrac 1h\,(G_{h}-I)u= fu\quad\text{in } H \quad\text{for all }u\in \cap_{\ell =1}^s \dom(f_{\ell}). 
\end{align*}
If the Assumptions~\ref{ass:1}--\ref{ass:f2} hold, then 
\begin{align*}
\lim_{n\to\infty}\sup_{t\in(0,\,T)}\bigl\| G^{n}_{t/n}\eta-\ee{tf}\eta\bigr\|_{H}=0
\end{align*}
for every $\eta\in H$ and $T<\infty$.
\end{lemma}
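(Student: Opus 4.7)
The plan is to verify that the hypotheses of the statement match those of Corollary~4.3 in Brezis--Pazy~\cite{BrezisPazy.1972} and then invoke that result directly. The heavy lifting has essentially already been done in the preceding sections; the proof is really a matter of bookkeeping.

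First I would confirm the semigroup side. By Lemma~\ref{lem:friedrich} the operator $f$ is maximal dissipative, and Assumption~\ref{ass:f2} guarantees $\overline{\dom(f)}=H$. Hence the Crandall--Liggett exponential formula produces a nonexpansive semigroup $\{\ee{tf}\}_{t\geq 0}$ on all of $H$, so that $\ee{tf}\eta$ makes sense for every $\eta\in H$. Next I would identify $D := \bigcap_{\ell=1}^{s}\dom(f_{\ell})$ as a \emph{core} for $f$: Lemma~\ref{lem:close} asserts precisely that the closure of $f|_{D}$ equals $f$, which is the definition of a core. The consistency hypothesis in the statement is formulated on exactly this set, and each $G_{h}$ is nonexpansive by assumption. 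This matches the standing hypotheses of~\cite[Corollary~4.3]{BrezisPazy.1972}, and the desired uniform convergence follows at once for every $\eta$ in the closure of $\dom(f)$, which under Assumption~\ref{ass:f2} is all of $H$.

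The only point that deserves comment is that the consistency hypothesis is imposed on the proper subset $D$ rather than on all of $\dom(f)$; this is exactly the situation where ``core'' is the relevant notion, and the core property is precisely what Lemma~\ref{lem:close} provides. If a density extension has to be supplied by hand rather than read off from Brezis--Pazy, it is standard: for arbitrary $\eta\in H$, pick $\eta'\in\dom(f)$ with $\|\eta-\eta'\|_{H}$ small, observe that both $G^{n}_{t/n}$ and $\ee{tf}$ are nonexpansive, and use the triangle inequality
\[
\|G^{n}_{t/n}\eta-\ee{tf}\eta\|_{H}\leq 2\|\eta-\eta'\|_{H}+\|G^{n}_{t/n}\eta'-\ee{tf}\eta'\|_{H},
\]
where the right-hand side can be made uniformly small in $t\in(0,T)$ by first choosing $\eta'$ and then $n$ large. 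No step is genuinely difficult; the lemma really is a repackaging of the Lax-type theorem of Brezis--Pazy into the variational framework of this paper.
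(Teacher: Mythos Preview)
Your proposal is correct and follows exactly the approach the paper takes: the paper does not give a proof of this lemma at all but simply states it as a specialization of \cite[Corollary~4.3]{BrezisPazy.1972}, and your write-up spells out precisely the bookkeeping needed to see why that corollary applies in this setting (maximal dissipativity of $f$ via Lemma~\ref{lem:friedrich}, density of $\dom(f)$ via Assumption~\ref{ass:f2}, and the core property of $\bigcap_{\ell=1}^{s}\dom(f_{\ell})$ via Lemma~\ref{lem:close}). Your observation that consistency is only demanded on a core rather than on all of $\dom(f)$, and that Lemma~\ref{lem:close} is exactly what bridges this gap, is the one nontrivial point and you have identified it correctly.
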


\begin{theorem}\label{thm:sum}
If the Assumptions~\ref{ass:1}--\ref{ass:f2} hold, then the sum splitting~\eqref{schemes:sumpre} 
is convergent in $H$, uniformly on bounded time intervals, to the mild solution of the abstract evolution 
equation~\eqref{eq:evolv}, i.e., 
\begin{align*}
\lim_{n\to\infty}\sup_{t\in(0,\,T)}\bigl\| S^{n}_{t/n}\eta-\ee{tf}\eta\bigr\|_{H}=0
\end{align*}
for every $\eta\in H$ and $T<\infty$.
\end{theorem}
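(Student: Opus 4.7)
The plan is to apply the abstract Lax-type convergence result in Lemma~\ref{lem:BrePaz} to the family $\{S_{h}\}_{h>0}$. For this, two properties must be verified: first, that each $S_{h}$ is a nonexpansive self-map of $H$, and second, that the family is consistent with $f$ on $\bigcap_{\ell=1}^{s}\dom(f_{\ell})$. Since Lemmas~\ref{lem:friedrich}--\ref{lem:close} and the standing Assumptions~\ref{ass:f1}--\ref{ass:f2} have already been assembled precisely to fit the hypotheses of Lemma~\ref{lem:BrePaz}, the bulk of the work is done and what remains is largely bookkeeping.

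For the nonexpansiveness, I note that each $f_{\ell}$ is maximal dissipative by Lemma~\ref{lem:friedrich}, so the resolvent $(I-hsf_{\ell})^{-1}:H\to H$ is well defined and nonexpansive for every $h>0$. A convex combination of nonexpansive maps is nonexpansive by the triangle inequality, hence $S_{h}$ maps $H$ into $H$ nonexpansively.

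For the consistency, fix an arbitrary $u\in\bigcap_{\ell=1}^{s}\dom(f_{\ell})$ and set $v_{h,\ell}=(I-hsf_{\ell})^{-1}u$. By the definition of the resolvent, $v_{h,\ell}-u=hsf_{\ell}v_{h,\ell}$, and therefore
\[
\frac{1}{h}(S_{h}u-u)
=\frac{1}{hs}\sum_{\ell=1}^{s}(v_{h,\ell}-u)
=\sum_{\ell=1}^{s}f_{\ell}(I-hsf_{\ell})^{-1}u.
\]
Applying Lemma~\ref{lem:yosida} to each maximal dissipative $f_{\ell}$ with parameter $hs\to 0$ (which is valid because $u\in\dom(f_{\ell})$) yields $f_{\ell}(I-hsf_{\ell})^{-1}u\to f_{\ell}u$ in $H$. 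Summing the $s$ limits and invoking Lemma~\ref{lem:frieddom}, which identifies $\sum_{\ell=1}^{s}f_{\ell}u$ with $fu$ on the intersection, we conclude
\[
\lim_{h\to 0}\frac{1}{h}(S_{h}u-u)=\sum_{\ell=1}^{s}f_{\ell}u=fu\quad\text{in }H,
\]
which is the required consistency condition.

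With both hypotheses verified, Lemma~\ref{lem:BrePaz} immediately delivers the stated uniform convergence of $S^{n}_{t/n}\eta$ to $\ee{tf}\eta$ on bounded time intervals, for every $\eta\in H$. I do not anticipate any genuine obstacle here: the technically delicate points, namely the identification of $f$ as the closure of its restriction to $\bigcap_{\ell}\dom(f_{\ell})$ and the density of that intersection in $H$, have been dealt with earlier in the paper, and the only mildly nontrivial manipulation in the present argument is the Yosida-style rewriting of $(S_{h}u-u)/h$ as a sum of Yosida approximants.
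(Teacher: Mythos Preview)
Your proposal is correct and follows essentially the same argument as the paper: verify nonexpansivity of $S_h$ via the convex combination of nonexpansive resolvents, rewrite $\tfrac{1}{h}(S_h u - u)$ as $\sum_{\ell} f_\ell (I-hsf_\ell)^{-1}u$ using the resolvent identity, invoke Lemma~\ref{lem:yosida} and Lemma~\ref{lem:frieddom} for consistency, and conclude by Lemma~\ref{lem:BrePaz}. There is nothing to add.
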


\begin{proof}
As each resolvent $(I-hsf_{\ell})^{-1}$ is nonexpansive on $H$ for all values of $hs>0$, one has the bound
\begin{align*}
\|S_{h}u-S_{h}v\|_{H}\leq \sfrac 1s\, \sum_{\ell=1}^{s} \|(I-hsf_{\ell})^{-1}u-(I-hsf_{\ell})^{-1}v\|_{H}\leq \|u-v\|_{H},
\end{align*}
and $S_{h}$ is therefore nonexpansive on $H$. To validate the consistency of $\{S_{h}\}_{h>0}$, we first observe that
\begin{align*}
\sfrac 1h\,\bigl((I-hsf_{\ell})^{-1}-I\bigr)=sf_{\ell}(I-hsf_{\ell})^{-1}.
\end{align*}
The consistency can then be formulated in terms of the Yosida approximation, i.e.,
for every $u\in \cap_{\ell =1}^s \dom(f_{\ell})$ one has the limit
\begin{align*}
\sfrac1h\,(S_{h}-I)u = \sum_{\ell=1}^{s} \sfrac{1}{hs}\,\bigl((I-hsf_{\ell})^{-1}-I\bigr)u 
= \sum_{\ell=1}^{s} f_{\ell}(I-hsf_{\ell})^{-1}u\to \sum_{\ell=1}^{s} f_{\ell}u=fu
\end{align*}
in $H$, as $h\to 0$; compare with Lemma~\ref{lem:yosida}. The desired convergence is then proven 
as the hypotheses of Lemma~\ref{lem:BrePaz} hold. \qed
\end{proof}

\begin{theorem}\label{thm:lie}
If the Assumptions~\ref{ass:1}--\ref{ass:f2} hold, then the Lie splitting~\eqref{schemes:liepre} 
is convergent in $H$, uniformly on bounded time intervals, to the mild solution of the abstract evolution 
equation~\eqref{eq:evolv}, i.e., 
\begin{align*}
\lim_{n\to\infty}\sup_{t\in(0,\,T)}\bigl\| P^{n}_{t/n}\eta-\ee{tf}\eta\bigr\|_{H}=0
\end{align*}
for every $\eta\in H$ and $T<\infty$.
\end{theorem}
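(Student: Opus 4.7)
The plan is to mimic the three-step structure of the proof of Theorem~\ref{thm:sum}: establish that each $P_{h}:H\to H$ is nonexpansive, check consistency of $\{P_{h}\}_{h>0}$ with $f$ on the dense set $\bigcap_{\ell=1}^{s}\dom(f_{\ell})$, and then invoke Lemma~\ref{lem:BrePaz}. Nonexpansiveness is immediate: by Lemma~\ref{lem:friedrich} each $f_{\ell}$ is maximal dissipative, hence each resolvent $(I-hf_{\ell})^{-1}$ is nonexpansive on $H$, and $P_{h}$ inherits this property as a composition of nonexpansive maps.

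For consistency, fix $u \in \bigcap_{\ell=1}^{s}\dom(f_{\ell})$ and set $v_{0}^{h}=u$ and $v_{\ell}^{h}=(I-hf_{\ell})^{-1}v_{\ell-1}^{h}$, so that $P_{h}u = v_{s}^{h}$. The identity $v_{\ell}^{h}-v_{\ell-1}^{h}=hf_{\ell}v_{\ell}^{h}$ yields the telescoping
\begin{equation*}
\sfrac{1}{h}(P_{h}u-u) = \sum_{\ell=1}^{s} f_{\ell}v_{\ell}^{h},
\end{equation*}
whose target by Lemma~\ref{lem:frieddom} is $fu = \sum_{\ell=1}^{s}f_{\ell}u$. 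A short induction in $\ell$ using nonexpansiveness of each resolvent and the Yosida bound $\|(I-hf_{\ell})^{-1}u - u\|_{H}\leq h\|f_{\ell}u\|_{H}$ yields $\|v_{\ell}^{h}-u\|_{H}\leq h\sum_{k=1}^{\ell}\|f_{k}u\|_{H}$, and hence $v_{\ell}^{h}\to u$ in $H$ at rate $O(h)$. I would then split
\begin{equation*}
f_{\ell}v_{\ell}^{h}-f_{\ell}u = \bigl(f_{\ell}(I-hf_{\ell})^{-1}v_{\ell-1}^{h}-f_{\ell}(I-hf_{\ell})^{-1}u\bigr) + \bigl(f_{\ell}(I-hf_{\ell})^{-1}u - f_{\ell}u\bigr),
\end{equation*}
where the second bracket tends to zero in $H$ by Lemma~\ref{lem:yosida}. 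The first bracket is controlled by feeding the resolvent identity into the dissipativity inequality for $f_{\ell}$ together with the $O(h)$ control on $\|v_{\ell-1}^{h}-u\|_{H}$; this yields uniform boundedness of $\|f_{\ell}v_{\ell}^{h}\|_{H}$ and the strong convergence $f_{\ell}v_{\ell}^{h}\to f_{\ell}u$. Summing over $\ell$ gives consistency, and Lemma~\ref{lem:BrePaz} closes the argument.

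The main obstacle is the last passage $f_{\ell}v_{\ell}^{h}\to f_{\ell}u$. In the sum splitting the Yosida approximation is evaluated at $u$ itself, and Lemma~\ref{lem:yosida} furnishes convergence directly. In the Lie splitting it is evaluated at the moving base points $v_{\ell-1}^{h}$, which only approach $u$ at rate $O(h)$, while the Yosida approximation is $2/h$-Lipschitz in its argument, so a crude Lipschitz estimate produces boundedness rather than smallness. The required strong convergence must therefore be extracted from the sharper inequality $h^{2}\|f_{\ell}Jv - f_{\ell}Jw\|_{H}^{2}\leq \|v-w\|_{H}^{2}-\|Jv-Jw\|_{H}^{2}$ with $J=(I-hf_{\ell})^{-1}$, which is a direct consequence of the dissipativity of $f_{\ell}$ and the resolvent identity, combined with the $O(h)$ control on the iterates and the fact that every $v_{\ell}^{h}$ lies in $\dom(f_{\ell})$.
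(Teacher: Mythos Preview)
Your overall strategy---nonexpansiveness of $P_h$, consistency on $\bigcap_\ell\dom(f_\ell)$, then Lemma~\ref{lem:BrePaz}---matches the paper, and the telescoping $\sfrac{1}{h}(P_hu-u)=\sum_\ell f_\ell v_\ell^h$ together with the estimate $\|v_{\ell}^h-u\|_H\le h\sum_{k\le\ell}\|f_ku\|_H$ are correct. The obstacle you single out is exactly the right one.

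The gap is in your last paragraph. The firm-nonexpansiveness inequality $h^2\|f_\ell Jv-f_\ell Jw\|_H^2\le\|v-w\|_H^2-\|Jv-Jw\|_H^2$ is valid, but with $v=v_{\ell-1}^h$, $w=u$ and $\|v_{\ell-1}^h-u\|_H=O(h)$ it yields only that $\|f_\ell v_\ell^h-f_\ell(I-hf_\ell)^{-1}u\|_H$ is \emph{bounded}: nothing in your argument forces $\|v_{\ell-1}^h-u\|_H^2-\|v_\ell^h-(I-hf_\ell)^{-1}u\|_H^2=o(h^2)$, so the sharper inequality delivers no more than the crude Lipschitz estimate you already dismissed. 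Boundedness of $f_\ell v_\ell^h$ together with $v_\ell^h\to u$ and maximal dissipativity of $f_\ell$ gives at best weak convergence $f_\ell v_\ell^h\rightharpoonup f_\ell u$, which is insufficient for Lemma~\ref{lem:BrePaz}.

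The paper closes this gap with a shift. By induction on $\ell$ one shows that $z_\ell:=\lim_{h\to0}\sfrac{1}{h}(u-v_{\ell-1}^h)$ exists in $H$ (in fact $z_\ell=-\sum_{k<\ell}f_ku$). One then introduces the translated operator $e_\ell:=f_\ell-z_\ell$, still maximal dissipative, and uses the identity $(I-hf_\ell)^{-1}v_{\ell-1}^h=(I-he_\ell)^{-1}(v_{\ell-1}^h+hz_\ell)$. The point of the shift is that the new argument satisfies $\|(v_{\ell-1}^h+hz_\ell)-u\|_H=o(h)$ rather than merely $O(h)$; nonexpansiveness of $(I-he_\ell)^{-1}$ then makes the moving-base-point contribution vanish after division by $h$, and Lemma~\ref{lem:yosida} applied to $e_\ell$ at the fixed argument $u$ handles the remainder. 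This $O(h)\to o(h)$ upgrade via the operator shift is the missing idea in your proposal.
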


\begin{proof}
We once more prove convergence by validating the hypotheses of Lemma~\ref{lem:BrePaz}. The nonexpansivity
of the operator $P_{h}$ on $H$ follows trivially as every resolvent $(I-hf_{\ell})^{-1}$ has the same property. In order to validate the consistency of $\{P_{h}\}_{h>0}$, let $u\in \cap_{\ell =1}^s \dom(f_{\ell})$ and consider the telescopic expansion
\begin{equation}\label{eq:Pconist}
\sfrac1h\,(P_{h}-I)u = \sum_{\ell=1}^{s} \sfrac1h\,\bigl((I-hf_{\ell})^{-1}-I\bigr)u_{\ell,h}= 
\sum_{\ell=1}^{s} f_{\ell}(I-hf_{\ell})^{-1}u_{\ell,h},
\end{equation}
where $u_{1,h}=u$ and 
\begin{equation*}
u_{\ell,h}=(I-hf_{\ell-1})^{-1}\ldots(I-hf_{1})^{-1}u\quad \text{for }\ell=2,\ldots,s.
\end{equation*}
As the arguments of the Yosida approximations in~\eqref{eq:Pconist} are $h$ dependent, we can not directly use Lemma~\ref{lem:yosida}. Instead, we assume for the time being that the limit 
\begin{equation}\label{eq:uhlimit}
\lim_{h\to 0} \sfrac1h(u-u_{\ell,h}) =z_{\ell},\quad \text{in }H,
\end{equation}
exists. By introducing the maximal dissipative operator 
\begin{align*}
e_{\ell}: \dom(f_{\ell}) \subseteq H \to H : u\mapsto f_{\ell}u-z_{\ell},
\end{align*}
which satisfies $(I-hf_{\ell})^{-1}u_{\ell,h}=(I-he_{\ell})^{-1}(u_{\ell,h}+hz_{\ell})$, we have the reformulation
\begin{align*}
f_{\ell}(I-hf_{\ell})^{-1}u_{\ell,h} &= \sfrac1h\,(I-he_{\ell})^{-1}(u_{\ell,h}+hz_{\ell})-\sfrac1h\,(I-he_{\ell})^{-1}u\\
&\qquad+\sfrac1h\,\bigl((I-he_{\ell})^{-1}-I\bigr)u+\sfrac1h\,(u-u_{\ell,h}).
\end{align*}
By Lemma~\ref{lem:yosida} and the nonexpansivity of $(I-he_{\ell})^{-1}$, one then obtains the limit 
\begin{align*}
\|f_{\ell}&(I-hf_{\ell})^{-1}u_{\ell,h} -f_{\ell}u\|_{H}\\
&\leq \|\sfrac1h\,(I-he_{\ell})^{-1}(u_{\ell,h}+hz_{\ell})-\sfrac1h\,(I-he_{\ell})^{-1}u\|_{H}\\
&\qquad+\|\sfrac1h\,\bigl((I-he_{\ell})^{-1}-I\bigr)u-e_{\ell} u\|_{H} + \|\sfrac1h\,(u-u_{\ell,h}) + e_{\ell} u - f_{\ell} u\|_{H}\\
&\leq \|-\sfrac1h\,(u-u_{\ell,h})+z_{\ell}\|_{H}+\|e_{\ell}(I-he_{\ell})^{-1}u-e_{\ell}u\|_{H}\\
&\qquad+ \|\sfrac1h\,(u-u_{\ell,h})-z_{\ell}\|_{H}\to 0,\quad\text{as }h\to 0.
\end{align*}
Hence, if \eqref{eq:uhlimit} exists then $ \lim_{h\to 0}f_{\ell}(I-hf_{\ell})^{-1}u_{\ell,h}=f_{\ell}u$. Furthermore,
if \eqref{eq:uhlimit} exists for every $\ell=1,\ldots,s$, then $\lim_{h\to 0}1/h\,(P_{h}-I)u=fu$ in $H$.

The limit \eqref{eq:uhlimit} obviously exists for $\ell=1$. If it exists for $\ell=k$ then it also exists for $\ell=k+1$, as
\begin{align*}
\sfrac1h\,(u-u_{k+1,h})&=\sfrac1h\,(u-u_{k,h})-\sfrac1h\,\bigl((I-hf_{k})^{-1}-I\bigr)u_{k,h}\\
&=\sfrac1h\,(u-u_{k,h})-f_{k}(I-hf_{k})^{-1}u_{k,h}\to z_{k}-f_{k}u
\end{align*}
in $H$, as $h\to 0$. By induction, the limit~\eqref{eq:uhlimit} exists for every $\ell=1,\ldots,s$, and $\{P_{h}\}_{h>0}$ is therefore consistent.\qed
\end{proof}

\begin{remark}
The results can be extended to perturbed equations $\dot{u}=(f+g)u$, e.g., arising if a lower-order advection or reaction term is added to the diffusion process. Here, $g$ and $f+g$ are both assumed to satisfy a shifted dissipativity condition of the form
\begin{align*}
\ska{gu - gv}{u - v}_{H} \leq M[g] \|u-v\|_{H}^{2}\quad\text{for all }u,v\in\dom(g),
\end{align*}
with $M$ being a nonnegative constant, and the range condition $\ran(I-hg)=H$ for $h\in(0,1/M)$.
This is, e.g., satisfied when $g:H\to H$ is Lipschitz continuous. 
More elaborate perturbation examples are given in \cite[Section~II.3.2]{Barbu.1976}.
For these perturbed evolution equations, one has convergence for the modified splitting schemes, with a single step given by
\begin{align*}
\tilde{S}_{h}= (I-hg)^{-1}S_{h}\quad\text{and}\quad \tilde{P}_{h}= (I-hg)^{-1}P_{h},
\end{align*}
respectively. If $g:H\to H$ is in addition Lipschitz continuous, then convergence is also obtained for the semi-implicit schemes
\begin{align*}
\hat{S}_{h}= (I+hg)S_{h}\quad\text{and}\quad \hat{P}_{h}= (I+hg)P_{h}.
\end{align*}
The convergence of the modified schemes follow just as for the proof of Theorem~\ref{thm:lie} together with 
the fact that \cite[Corollary~4.3]{BrezisPazy.1972} is valid for operators $G_{h}$ that have Lipschitz constants of the form $1+Ch$.
\end{remark}

\section{Parabolic equations of p-Laplace type}\label{sec:pLap}

As a first problem class we consider the parabolic equations of $p$-Laplace type with 
homogeneous Neumann boundary conditions, i.e., 
\begin{equation}\label{eq:pLap}
\begin{cases}
\partial u/ \partial t = \nabla \cdot \alpha(\nabla u)  &\text{in } \Omega\times (0,T),\\
\alpha(\nabla u)\cdot n =0 &\text{on } \partial \Omega\times (0,T),\\
u(0) =\eta  &\text{in } \Omega.
\end{cases}
\end{equation}
The domain $\Omega\subset \R^d$ is assumed to have a locally Lipschitz boundary $ \partial \Omega$, and the map $\alpha:\Omega\times\R^{d}\to \R^{d}$ satisfies Assumption~\ref{ass:3} for a given $p\geq 2$. The classical $p$-Laplacian is then given by 
\begin{equation*}
\alpha(x,z)=|z|^{p-2}z.
\end{equation*}
After multiplication with $v$ and a subsequent integration by parts, the variational form of~\eqref{eq:pLap} and its decomposition is formally given by
\begin{equation}\label{eq:pLaplvar}
(\partial u/ \partial t,v)_{L^2(\Omega)} =  - \int_{\Omega}\alpha (\nabla u)\cdot \nabla v \dx = 
- \sum_{\ell=1}^{s}  \int_{\Omega_{\ell}}\chi_{\ell} \alpha (\nabla u)\cdot \nabla v \dx.
\end{equation}
Here, we have introduce a domain decomposition $\{\Omega_{\ell}\}_{\ell =1}^s$, where $\bigcup_{\ell =1}^s \Omega_{\ell}=\Omega$, together with a partition of unity $\{\chi_{\ell}\}_{\ell=1}^{s}$ chosen in $W^{1,\infty}(\Omega)$; compare with Remark~\ref{rem:hm1}. 

In order to fit the variational form into the abstract setting of Sections~\ref{sec:enform}, we choose the pivot space $H = L^2(\Omega)$ and the operator $\delta$ as the distributional gradient
\begin{align*}
\delta: L^2(\Omega)  \to \D^d: u \mapsto \nabla u.
\end{align*}
This choice of $\delta$ fulfills the continuity Assumption~\ref{ass:1}, since for a convergent sequence $\seq{u}\subset L^2(\Omega)$ and an arbitrary $\phi \in C^{\infty}_0(\Omega)$ one can write
\begin{align*}
\lim_{n\to \infty} (D_j u_n) (\phi)  = -\lim_{n\to \infty} \int_{\Omega} u_n D_j \phi \dx = - \int_{\Omega} u D_j \phi \dx = (D_j u) (\phi) 
\end{align*} 
for every$j=1,\dots,d$, where $D_j$ is the $j$-th partial derivative in a distributional sense.
The space $V$ is then
\begin{align*}
V = \bigl\{ u\in L^2(\Omega): \nabla u \in L^p(\Omega)^d \bigr\}.
\end{align*}
A bootstrap argument using the Sobolev embedding theorem yields the identification $V = \Wp$. Since $\Wp$ is dense in $L^2(\Omega)$, Assumption~\ref{ass:2} is also fulfilled. 

With these choices, $\delta_{p}u$ is simply the weak gradient of $u\in\Wp$ and we obtain the standard energetic form $F :V \to V^*$ of $p$-Laplace type vector fields, i.e., 
\begin{align*}
\dualV{Fu}{v}  = \int_{\Omega}\alpha (\nabla u)\cdot \nabla v \dx.
\end{align*}
The domain of the corresponding Friedrich extension can be written as
\begin{align*}
\dom(f) &= \Bigl\{u \in \Wp:\text{ there exists a }z\in L^2(\Omega) \text{ such that }\\
& \qquad \qquad -\int_{\Omega}\alpha (\nabla u)\cdot \nabla v \dx=\int_{\Omega} zv\dx \quad\text{for all } v \in \Wp \Bigr\}, 
\end{align*}
and $fu$ is given by the weak divergence of $\alpha(\nabla u)$. The same characterization can be made for $F_{\ell}$ and $f_{\ell}$, respectively. Applying Lemma~\ref{lem:friedrich} the operators $f$ and $f_{\ell}$, $\ell =1,\dots,s$, are maximal dissipative and Lemma~\ref{lem:frieddom} yields that
\begin{align*}
\bigcap_{\ell =1}^{s} \dom(f_{\ell}) \subseteq \dom(f)\quad \text{ and }\quad
fu = \sum_{\ell =1}^{s} f_{\ell}u\quad\text{for }u\in\bigcap_{\ell =1}^{s} \dom(f_{\ell}).
\end{align*}
Validation of Assumption~\ref{ass:f2} requires further structure of the map $\alpha$. For the classical $p$-Laplacian the related $\alpha$ is continuously differentiable and $\alpha(0)=0$, which implies that $C_0^{\infty}(\Omega)$ is a subset of $\dom(f)$. Hence, $\dom(f)$ is dense in $L^2(\Omega)$ and Assumption~\ref{ass:f2} is valid in this context. Finally, if Assumption~\ref{ass:f1} holds then the convergence results from Section \ref{sec:approx} can directly be applied. 

\begin{figure}
 \centering 
 \includegraphics[scale=0.7]{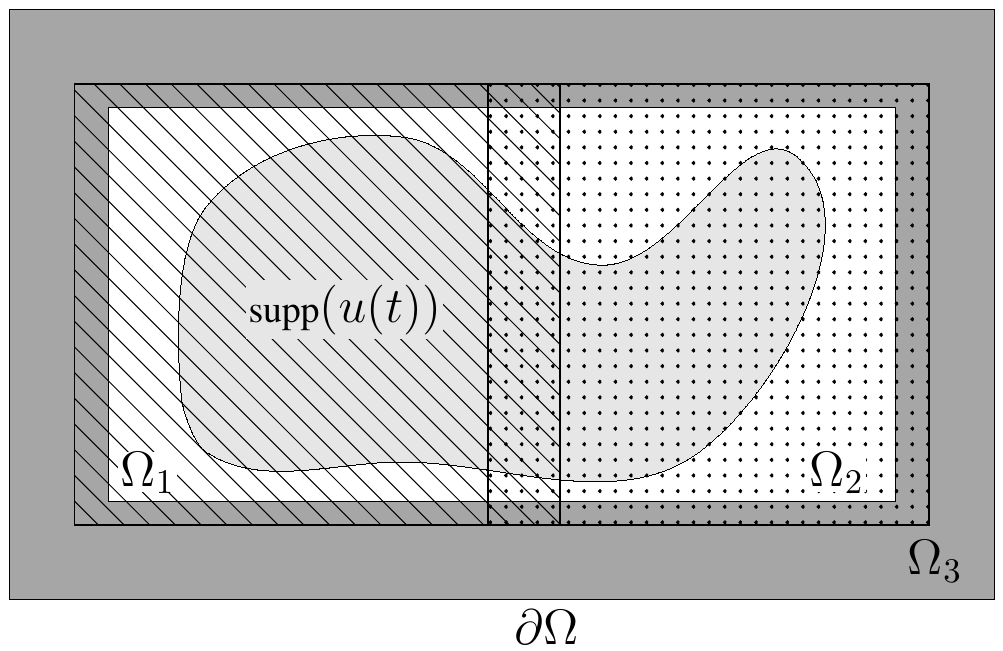}
 \caption{An example of a domain decomposition $\{\Omega_{\ell}\}_{\ell =1}^3$ that fulfills \eqref{ass:dom}.}
 \label{fig:pLaplace}
\end{figure}

Apart from the special cases when $d=1$ or $p=2$, the domains $\dom(f)$ of $p$-Laplace type vector fields can not be expected to coincide with $\bigcap_{\ell =1}^{s} \dom(f_{\ell})$. The issue is that for an element $u \in \dom(f)$ one has
\begin{align*}
f_{\ell}u=\nabla\cdot \bigl(\chi_{\ell} \alpha(\nabla u) \bigr)=  \nabla \chi_{\ell} \cdot\alpha(\nabla u) 
+ \chi_{\ell} \nabla\cdot \alpha(\nabla u),
\end{align*}
where the function $\alpha(\nabla u)$ only lies in $L^{\nicefrac{p}{(p-1)}}(\Omega)^{d}$, with $p>2$. The term $f_{\ell}u$ is therefore, in general, not an $L^{2}(\Omega)$ function. In order to give a possible setting for which Assumption~\ref{ass:f1} is valid, we assume that the domain decomposition $\{\Omega_{\ell}\}_{\ell =1}^s$ is chosen such that
\begin{equation}\label{ass:dom}
\text{closure}(\bigcup_{\ell =1}^{s-1}\Omega_{\ell})\setminus\partial\Omega = \emptyset.
\end{equation}
That is, the subdomain $\Omega_{s}$ separates the boundary $\partial\Omega$ from the other subdomains; as illustrated in Figure~\ref{fig:pLaplace}.

\begin{lemma} 
Consider a domain decomposition $\{\Omega_{\ell}\}_{\ell =1}^s$ that satisfies \eqref{ass:dom} and with subdomains $\Omega_{\ell}$, $\ell=1,\ldots,s-1$, that all have the segment property. If $p\geq 2$ in addition satisfies $p>(d+1)/2$ and the map $\alpha$ fulfills Assumption~\ref{ass:3}($\alpha_{2}$) with $c_{2}\in L^{2}(\Omega)$, then the Friedrich extension $f$ of a $p$-Laplace type vector field and its decomposition into the operators $f_{\ell}$ fulfill Assumption~\ref{ass:f1}.
\end{lemma}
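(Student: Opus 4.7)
The plan is to solve the resolvent equation in $V$ and then upgrade the solution's regularity enough to place it in every $\dom(f_\ell)$. Given $w \in V$, Corollary~\ref{cor:energy} furnishes a unique $u \in V$ with $(\gamma + hF)u = \gamma w$, so that $u \in \dom(f)$ and $fu = (u-w)/h \in L^{2}(\Omega)=H$. The identity $V=\bigcap_{\ell=1}^{s} V_{\ell}$ from Lemma~\ref{lem:Vintersec} gives $u \in V_{\ell}$ automatically, so all that is left is to show $F_{\ell}u \in H^{*}$ for each $\ell$.

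Fix $\ell \in \{1,\ldots,s-1\}$; by \eqref{ass:dom} the set $\overline{\Omega_{\ell}}$ is compactly contained in $\Omega$. A distributional integration by parts, using $\chi_{\ell}|_{\partial\Omega_{\ell}\cap\Omega}=0$ and the absence of boundary contributions on $\partial\Omega$ (which does not intersect $\overline{\Omega_{\ell}}$), yields
\begin{equation*}
F_{\ell}u = -\chi_{\ell}\, fu - \nabla\chi_{\ell}\cdot \alpha(\nabla u) \quad\text{in }\D.
\end{equation*}
Since $\chi_{\ell}\, fu \in L^{2}(\Omega)$, the membership $F_{\ell}u\in H^{*}$ reduces to $\nabla\chi_{\ell}\cdot \alpha(\nabla u) \in L^{2}(\Omega)$. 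As $\nabla\chi_{\ell}\in L^{\infty}(\Omega)$ is supported in $\overline{\Omega_{\ell}}$, the growth condition $(\alpha_{2})$ together with $c_{2}\in L^{2}(\Omega)$ further reduces the question to the interior integrability $\nabla u \in L^{2(p-1)}(\Omega')^{d}$ on some open $\Omega'$ with $\overline{\Omega_{\ell}}\subset \Omega'\subset\subset \Omega$. The case $\ell = s$ is handled identically via $\nabla\chi_{s}=-\sum_{\ell<s}\nabla\chi_{\ell}$, whose support is again interior; the weak Neumann condition carried by $u\in\dom(f)$ ensures that no boundary term is lost on $\partial\Omega$.

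The main obstacle is thus establishing the interior higher-integrability $\nabla u \in L^{2(p-1)}_{\mathrm{loc}}(\Omega)^{d}$, and this is where the hypotheses $p > (d+1)/2$, $c_{2}\in L^{2}$ and the segment property of $\Omega_{\ell}$ enter. The plan is a Nirenberg-type difference-quotient argument for the weak identity
\begin{equation*}
\int_{\Omega} u v \dx + h\int_{\Omega} \alpha(\nabla u)\cdot \nabla v \dx = \int_{\Omega} w v\dx,
\end{equation*}
tested against shifted differences of $\eta^{2} u$ in each coordinate direction, where $\eta\in C_{c}^{\infty}(\Omega)$ is a cutoff that equals one on a neighborhood of $\overline{\Omega_{\ell}}$ (such a cutoff exists by the segment property). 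Combining monotonicity $(\alpha_{3})$ with coercivity $(\alpha_{4})$ and absorbing the right-hand side through $w\in W^{1,p}(\Omega)$ produces, upon passage to the limit, a uniform $L^{2}(\Omega')$ bound on the gradient of $|\nabla u|^{(p-2)/2}\nabla u$, hence $|\nabla u|^{(p-2)/2}\nabla u \in W^{1,2}(\Omega')^{d}$. Sobolev embedding on $\Omega'$, together with the assumption $p > (d+1)/2$ which is precisely what is needed for the resulting Sobolev exponent to reach $2(p-1)$, converts this into the desired bound $\nabla u\in L^{2(p-1)}(\Omega')^{d}$, finishing the proof.
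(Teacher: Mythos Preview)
Your overall strategy---solve the resolvent equation, upgrade interior regularity of $\nabla u$ so that $\alpha(\nabla u)\in L^2_{\mathrm{loc}}$, then integrate by parts against $\chi_\ell v$---is the same as the paper's. But the regularity step, which is the heart of the argument, has two genuine gaps.

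First, the Nirenberg difference-quotient argument you sketch does not run on the hypotheses available. To extract an $L^2$ bound on the gradient of $|\nabla u|^{(p-2)/2}\nabla u$ from differenced test functions one needs a \emph{quantitative} ellipticity of the form
\[
\bigl(\alpha(x,z)-\alpha(x,z')\bigr)\cdot(z-z')\;\geq\;c\,(|z|+|z'|)^{p-2}|z-z'|^{2},
\]
and this is not implied by the bare monotonicity~$(\alpha_3)$ together with the one-point coercivity~$(\alpha_4)$. Assumption~\ref{ass:3} allows, for instance, maps whose monotonicity degenerates on sets where the Nirenberg trick gives nothing.

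Second, even for the model $p$-Laplacian your Sobolev step is insufficient. From $|\nabla u|^{(p-2)/2}\nabla u\in W^{1,2}_{\mathrm{loc}}$ one obtains, for $d>2$, only $|\nabla u|\in L^{pd/(d-2)}_{\mathrm{loc}}$, and the inequality $pd/(d-2)\geq 2(p-1)$ is equivalent to $p(d-4)\leq 2(d-2)$. This fails for large $d$: e.g.\ $d=7$, $p=5$ satisfies $p>(d+1)/2$ but gives $|\nabla u|\in L^{7}_{\mathrm{loc}}$ while $2(p-1)=8$. So the claim that $p>(d+1)/2$ is ``precisely what is needed'' for the embedding is incorrect.

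The paper avoids both problems by invoking DiBenedetto's interior $C^{1,\beta}$ regularity~\cite{diBenedetto.1983}: since $fu=(u-w)/h\in W^{1,p}(\Omega)$, and $p>(d+1)/2$ is exactly the condition ensuring $W^{1,p}(\Omega)\hookrightarrow L^{r}(\Omega)$ for some $r>dp/(p-1)$, the right-hand side is integrable enough for DiBenedetto's theorem to give $\nabla u$ locally H\"older continuous. This yields $\alpha(\nabla u)\in L^2_{\mathrm{loc}}$ at once and under the sole structural hypotheses of Assumption~\ref{ass:3}. Two smaller points: the distributional identity for $F_\ell u$ does not by itself determine the action on all of $V_\ell$ (for the Neumann problem $C_c^\infty(\Omega)$ is not dense in $V_\ell$), so the integration by parts must be justified \emph{after} the interior regularity is in hand, exactly as in the paper; and the segment property of $\Omega_\ell$ is used not to build cutoffs but to identify $\chi_\ell v$ as an element of $H^1_0(\Omega_\ell)$, which is what makes the extended integration by parts legitimate. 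The case $\ell=s$ also needs more care than $\nabla\chi_s=-\sum_{\ell<s}\nabla\chi_\ell$: one must split $\chi_s v$ into a $W^{1,p}(\Omega)$ extension from a boundary collar plus an interior $H^1$ remainder, as the paper does.
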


\begin{proof}
For an arbitrary $g \in V=\Wp$ there exists a unique $u\in \dom(f)$ such that
$u-hfu = g$ and Assumption~\ref{ass:f1} is then valid if $u\in\bigcup_{\ell =1}^s\dom(f_{\ell})$. To prove this, we first observe that  $fu=\nabla \cdot \alpha(\nabla u) = (u-g)/h \in\Wp$ and $\Wp \incl L^r(\Omega)$ for some $r > dp/(p-1)$, as $p \geq 2$ and $p > (d+1)/2$. Hence, \cite[Theorem 2 and Remarks pp.~829--830]{diBenedetto.1983} implies that $\nabla u$  is locally H\"older continuous on $\Omega$ and we obtain that
\begin{align*}
\alpha(\nabla u)|_{\Omega_{int}}\in L^{2}(\Omega_{int})^{d}
\end{align*}
for every open domain $\Omega_{int}$ such that $\overline{\Omega}_{int}\subset\Omega$. 

As $u\in\dom(f)$, we have the integration by parts
\begin{align} \label{eq:intbypar}
-\int_{\Omega} \alpha (\nabla u)\cdot \nabla w\dx
= \int_{\Omega}  \nabla\cdot\alpha (\nabla u) w\dx
\end{align} 
for every $w\in\Wp$. Due to the  extra interior regularity of $\alpha(\nabla u)$ we can, e.g., extend~\eqref{eq:intbypar} to 
all $w=w_{1}+w_{2}$, where $w_{1}\in \Wp$ and $w_{2}\in H^{1}(\Omega)$ is a.e.\ zero on $\Omega\setminus\Omega_{int}$
for some open subdomain $\Omega_{int}$ that has the segment property and fulfills $\overline{\Omega}_{int}\subset\Omega$.
The latter implies that $w_{2}$ is the zero extension of $w_{2}|_{\Omega_{int}}\in H^{1}_0 (\Omega_{int})$; see, e.g., \cite[Theorem 5.29]{AdamsFournier.2003}.  

Next, let $v\in V_{\ell}\subset L^{2} (\Omega)$, for $\ell=1,\ldots,s$, and consider $\chi_{\ell} v \in L^2(\Omega)$. Here, 
\begin{align*}
D_{j}(\chi_{\ell} v) (\phi) &= D_{j}( v) (\chi_{\ell}\phi) + \int_{\Omega} (D_{j}\chi_{\ell}) v\phi\dx= \int_{\Omega_{\ell}} \bigl(\chi_{\ell} (\delta_{p,\ell}v)_{j}+(D_{j}\chi_{\ell})v\bigr)\phi \dx
\end{align*}
for every $\phi \in C^\infty_0 (\Omega)$, i.e.,  $\chi_{\ell} v \in H^1(\Omega)$ and $\chi_{\ell} v = 0$ a.e.\ on $\Omega\setminus \Omega_{\ell}$. If $\ell<s$ then $\chi_{\ell} v|_{\Omega_{\ell}}\in H^1_{0}(\Omega_{\ell})$. 

For $\ell=1,\ldots,s-1$, we can test with $w=\chi_{\ell}v$ and integrate by parts~\eqref{eq:intbypar}. Writing out $\nabla(\chi_{\ell}v)$ and rearranging the terms gives us 
\begin{align*} 
-\int_{\Omega_{\ell}} \chi_{\ell}\alpha (\nabla u)\cdot \delta_{p,\ell}v\dx =
 \int_{\Omega} \bigl(\chi_{\ell}\nabla\cdot \alpha (\nabla u)+\nabla\chi_{\ell}\cdot  \alpha (\nabla u)\bigr)v\dx,
\end{align*}
i.e., $u\in\bigcap_{\ell=1}^{s-1} \dom(f_{\ell})$, as the integrand on the right-hand side is in $L^{2}(\Omega)$.

It remains to prove that $u$ lies in $\dom(f_{s})$.  As the closure of $\bigcup_{\ell =1}^{s-1}\Omega_{\ell}$ does not intersect the outer boundary $\partial\Omega$, we can choose an open subset $\Omega_{out}\subset\Omega_{s}$ such that $\chi_{s}=1$ on 
$\Omega_{out}$, its boundary $\partial \Omega_{out}$ is locally Lipschitz continuous and $\partial\Omega\subset\partial\Omega_{s}$.
Let $v\in V_{s}$, then $\chi_{s}\delta_{p,s}v=\nabla v$ a.e.\ on $\Omega_{out}$ and $\chi_{s}v|_{\Omega_{out}}=v|_{\Omega_{out}}\in W^{1,p}(\Omega_{out})$. The local Lipschitz continuity of  $\partial \Omega_{out}$ implies, e.g., via \cite[Theorem 5.24]{AdamsFournier.2003}, that there exists an extension $w_{1}\in W^{1,p}(\Omega)$ such that $w_{1}=\chi_{s}v$ a.e.\ on $\Omega_{out}$.  Furthermore, $w_{2}=\chi_{s}v-w_{1}\in H^{1}(\Omega)$ is zero a.e. on  $\Omega_{out}$, i.e., it is a zero extension of an $H^{1}_0 (\Omega_{int})$ function on some subdomain $\Omega_{int}$, with $\partial\Omega_{int}\subset \Omega_{out}$. For every $v\in V_{s}$ we therefore have a partitioning of the form $w=\chi_{s}v=w_{1}+w_{2}$ and the integration by parts \eqref{eq:intbypar} is well defined for $\ell=s$. By the same argumentation as for $\ell<s$, one obtains that $u$ lies in $\dom(f_{s})$.
\qed
\end{proof}

\begin{remark}
From a numerical perspective the construction~\eqref{ass:dom} with a separating subdomain $\Omega_{s}$ is suboptimal for general time dependent PDEs, as it may increase the amount of communication in the implementation of scheme. However, as discussed in Section~\ref{sec:intro}, we are foremost interested in the approximation of solutions with compact support in $\Omega$. Hence, for sufficiently short time intervals $(0,T)$ there is obviously no communication related to $\Omega_{s}$; as exemplified in Figure~\ref{fig:pLaplace}.
\end{remark}

\section{Parabolic equations of porous medium type}\label{sec:por}

A second problem class that fits into our abstract setting is the parabolic equations of porous medium type with homogeneous Dirichlet boundary conditions, i.e., 
\begin{equation}\label{eq:pme}
\begin{cases}
\partial u/ \partial t = \Delta \alpha(u)  &\text{in } \Omega\times (0,T),\\
\alpha(u) =0 &\text{on } \partial \Omega\times (0,T),\\
u(0) =\eta  &\text{in } \Omega.
\end{cases}
\end{equation}
Here, the domain $\Omega\subset \R^d$ is assumed to have a locally Lipschitz boundary $ \partial \Omega$, and the map $\alpha:\Omega\times\R\to\R$ fulfills Assumption~\ref{ass:3} for a given $p$ that satisfies 
\begin{align*}
p\in (1,\infty)\quad\text{if }d\leq 2,\quad\text{and}\quad p\in [2d/(d+2), \infty)\quad\text{if }d>2.
\end{align*}
This restriction on $p$ is made in order to assure the embedding
\begin{equation}\label{eq:H1inLq}
H_0^1(\Omega)\overset{d}{\incl} L^{\nicefrac{p}{(p-1)}}(\Omega),
\end{equation}
which is central in our forthcoming analysis. The standard porous medium equation is then given by
\begin{align*}
\alpha(x,z)=|z|^{p-2}z,\quad\text{with }p\geq 2,
\end{align*}
and the fast diffusion equation is obtained for the same $\alpha$, but with $1<p<2$; see~\cite{Vasquez.2007}. The two-phase Stefan problem \cite[Section~5.10]{Friedman.1988} follows by choosing 
\begin{align*}
\alpha (x,z)  = 
\begin{cases}
a(z+1)  &\text{ for } z \leq -1\\
0  &\text{ for } z \in (-1,1)\\
b(z-1)  &\text{ for } z \geq 1\, ,
\end{cases}
\end{align*}
where $a,b>0$, and Assumption~\ref{ass:3} is then valid for $p=2$. 

After multiplying \eqref{eq:pme} by $w$, where $-\Delta w = v$ in $\Omega$ and $w=0$ on $\partial\Omega$, and integrating by parts twice, the variational form of~\eqref{eq:pme} and its decomposition is formally 
\begin{equation}\label{eq:pmevar}
\int_{\Omega}\frac{\partial u}{\partial t}\,(-\Delta)^{-1}v\dx = 
-\int_{\Omega}\alpha (u) v \dx = 
- \sum_{\ell=1}^{s} \int_{\Omega_{\ell}}\chi_{\ell} \alpha (u) v \dx.
\end{equation}
Above, we have once more introduced a domain decomposition $\{\Omega_{\ell}\}_{\ell =1}^s$ of $\Omega$ together with a partition of unity $\{\chi_{\ell}\}_{\ell=1}^{s}$. 

With the proper interpretation, the left-hand side of \eqref{eq:pmevar} is given by the inner product on
$H^{-1}(\Omega)$; compare with \cite[Bemerkung III.1.13]{GGZ.1974}. The formal variational formulation~\eqref{eq:pmevar} therefore leads us to choosing the pivot space $H=H^{-1}(\Omega)$ and the operator
\begin{align*}
\delta:H^{-1}(\Omega)  \to \D: u \mapsto u.
\end{align*}
The operator $\delta$ obviously fulfills the continuity Assumption~\ref{ass:1}. The space $V$ is now
\begin{align*}
V &=  \Bigl\{ u \in H^{-1}(\Omega) :\text{ there exists a } v\in L^p(\Omega) \text{ such that }\\
&  \qquad\qquad \dualHH{u}{\phi} = \int_{\Omega} v \phi \dx\quad\text{for all } \phi \in H^{1}_0(\Omega)\Bigr\}
= \bigl(L^{\nicefrac{p}{(p-1)}}(\Omega)\bigr)^*,
\end{align*}
and as before $\delta_p u = v$, where $v$ is the unique function stated in the definition of $V$. By the embedding~\eqref{eq:H1inLq} and~\cite[Bemerkung I.5.14]{GGZ.1974}, we obtain that 
\begin{align*}
\bigl(L^{\nicefrac{p}{(p-1)}}(\Omega)\bigr)^*\overset{d}{\incl} H^{-1}(\Omega),
\end{align*} 
i.e., Assumption~\ref{ass:2} is fulfilled. With these choices, we have the energetic form $F :V \to V^*$ given by
\begin{align*}
\dualV{Fu}{v}  = \int_{\Omega}\alpha (\delta_{p} u)\delta_{p} v \dx.
\end{align*}

In order to characterize the Friedrich operator $f$, we introduce the Dirichlet Laplacian $-\Delta:H^{1}_{0}(\Omega)\to H^{-1}(\Omega)$, where
\begin{align*}
\dualHH{-\Delta u}{v} = \int_{\Omega} \nabla u\cdot\nabla v\dx\quad\text{for all } u,v\in H^{1}_{0}(\Omega).
\end{align*}
As $-\Delta$ is the Riesz isomorphism from $H^{1}_{0}(\Omega)$ to $H^{-1}(\Omega)$,
the inner product on $H^{-1}(\Omega)$ satisfies
\begin{align*}
\ska{u}{v}_{H^{-1}(\Omega)} 
&= \sfrac14\bigl(\|u+v\|_{H^{-1}(\Omega)}-\|u-v\|_{H^{-1}(\Omega)}\bigr)\\
&=\sfrac14\bigl(\|(-\Delta)^{-1}(u+v)\|_{H^{1}_{0}(\Omega)}-\|(-\Delta)^{-1}(u-v)\|_{H^1_{0}(\Omega)}\bigr)\\
&=\ska{(-\Delta)^{-1}u}{(-\Delta)^{-1}v}_{H^{1}_{0}(\Omega)}\\
&=\dualHH{u}{(-\Delta)^{-1}v}
\end{align*}
for all $u,v\in H^{-1}(\Omega)$; compare with \cite{EmmrichSiska.2012}. Next, for $u\in\dom(f)$ there exists a $z\in H^{-1}(\Omega)$ such that
\begin{align*}
-\int_{\Omega}\alpha (\delta_{p} u)\,\delta_{p} v \dx=(z,v)_{H^{-1}(\Omega)}=\dualHH{v}{(-\Delta)^{-1}z}
\end{align*}
for all $v \in \bigl(L^{\nicefrac{p}{(p-1)}}(\Omega)\bigr)^*$, or equivalently 
\begin{align*}
-\int_{\Omega}\alpha (\delta_{p} u)\,w \dx= \int_{\Omega} w \,(-\Delta)^{-1} z\dx
\quad\text{ for all } w \in L^{p}(\Omega).
\end{align*}
Hence, $-\alpha (\delta_{p} u)=(-\Delta)^{-1}z\in H^{1}_{0}(\Omega)$; see, e.g., \cite[Lemma~3.31]{AdamsFournier.2003}, and we obtain the characterization 
\begin{align*}
\dom(f)=\bigl\{u\in \bigl(L^{\nicefrac{p}{(p-1)}}(\Omega)\bigr)^*:\alpha (\delta_{p} u)\in H^{1}_{0}(\Omega)\bigr\},
\end{align*}
and $fu=\Delta\alpha (\delta_{p} u)$ for $u\in\dom(f)$.

Analogously to Section~\ref{sec:pLap}, we have $\ran(\delta)= H^{-1}(\Omega) \subset \D$ and we can therefore allow a partition of unity $\{\chi_{\ell}\}$ in $W^{1,\infty}(\Omega)$. The spaces $V_{ \ell}$, $\ell =1,\dots,s$, are then
\begin{align*}
V_{\ell}&= \big\{ u \in H^{-1}(\Omega): \text{ there exists a } v \in L^p(\Omega_{\ell},\chi_{\ell}) \text{ such that }\\
& \qquad\qquad  \dualHH{\chi_{\ell} u}{\phi} = \int_{\Omega_{\ell}} \chi_{\ell} v \phi \dx\quad\text{for every } \phi \in H^{1}_0(\Omega) \big\}.
\end{align*}
Again, we write $\delta_{p, \ell} u$ for the unique $L^p(\Omega_{\ell},\chi_{\ell})$ function $v$ from this definition.

After introducing $F_{\ell}$ and $f_{\ell}$, as described in Sections~\ref{sec:enform}, we have by Lemmas~\ref{lem:friedrich} and~\ref{lem:frieddom} that the operators $f$ and $f_{\ell}$, $\ell =1,\dots,s$, are maximal dissipative and
\begin{align*}
fu = \sum_{\ell =1}^{s} f_{\ell}u\quad\text{for }u\in\bigcap_{\ell =1}^{s} \dom(f_{\ell})\subseteq \dom(f).
\end{align*}
Instead of Assumption~\ref{ass:f1} we can prove the stronger condition 
\begin{align*}
\bigcap_{\ell =1}^s \dom(f_{\ell}) = \dom(f).
\end{align*}
To prove the equality take an arbitrary $u\in \dom(f)$. Since $\alpha(\delta_p u )\in H_0^1(\Omega)$, we also have
that $\chi_{\ell}\alpha(\delta_p u ) \in H_0^1(\Omega)$ for every weight function $\chi_{\ell} \in W^{1,\infty}(\Omega)$ and
\begin{align*} 
-\int_{\Omega_{\ell}} \chi_{\ell}\alpha (\delta_{p} u)\, \delta_{p,\ell}v\dx &=\dualHH{v}{-\chi_{\ell}\alpha (\delta_{p} u)}\\
&= \ska{\Delta\bigl(\chi_{\ell}\alpha (\delta_{p} u)\bigr)}{v}_{H^{-1}(\Omega)}\quad \text{ for all } v\in V_{\ell}.
\end{align*}
That is, $u$ also lies in $\dom(f_{\ell})$ for $\ell =1,\dots,s$.

Assumption~\ref{ass:f2} requires some further regularity of the map $\alpha$ and the validation that $\alpha (\delta_{p} u)$ vanishes on the boundary $\partial\Omega$. For the porous medium equation and the two-phase Stefan problem one has that $\alpha(\phi)\in H^{1}_{0}(\Omega)$ for every $\phi\in C_0^{\infty}(\Omega)$. The set of functionals of the form $v\mapsto\int_{\Omega}u v\dx$, where $u\in C_0^{\infty}(\Omega)$ and $v\in H^{1}_{0}(\Omega)$, is therefore a subset of $\dom(f)$. It is also a dense subset of $H^{-1}(\Omega)$, as $C_0^{\infty}(\Omega)$ is dense in $L^{2}(\Omega)$ and $L^{2}(\Omega)^*$ is dense in $H^{-1}(\Omega)$. Hence, Assumption~\ref{ass:f2} is valid for these two prototypical examples, and the convergence results of Section~\ref{sec:approx} hold.

\begin{remark}
The variational setting of porous medium type equations, with $H^{-1}(\Omega)$ as pivot space, is by no means standard. However, it enables a clear-cut way of introducing the related Friedrich operator. The variational setting has, e.g., been proposed in \cite[Bemerkung I.5.14]{GGZ.1974}. It has also been employed in \cite{EmmrichSiska.2012} when proving convergence of finite element/implicit Euler approximations for the porous medium equation, on its very weak form. Note that the standard approach to prove that $\Delta\alpha$ is a maximal dissipative operator on $H^{-1}(\Omega)$ is to directly observe that it is the gradient of a convex function; see \cite[Example 3]{Brezis.1971}.
\end{remark}

\begin{acknowledgements}
Part of this study was conducted during Hansen's guest research stay at the Institut f\"{u}r Mathematik, TU Berlin. Hansen would like to thank Etienne Emmrich for enabling this inspiring stay. 
\end{acknowledgements}

\end{document}